\newtheorem{theorem}{Theorem}[section]
\newtheorem{lemma}{Lemma}[section]
\newtheorem{remark}{Remark}[section]
\newtheorem{example}{Example}[section]
\numberwithin{equation}{section}
\newcommand{\bb}{\boldsymbol}
\begin{document}
\begin{frontmatter}
	\title{A lowest-order locking-free nonconforming virtual element method based on the reduced integration technique for linear elasticity problems}	
	\author{Yue Yu}
	\ead{terenceyuyue@sjtu.edu.cn}
	\address{School of Mathematical Sciences, Institute of Natural Sciences, MOE-LSC, Shanghai Jiao Tong University, Shanghai, 200240, P. R. China.}
	
	\begin{abstract}
      We develop a lowest-order nonconforming virtual element method for planar linear elasticity, which can be viewed as an extension of the idea in {\color{blue} Falk (1991)} to the virtual element method (VEM),  with the family of polygonal meshes satisfying a very general geometric assumption. The method is shown to be uniformly convergent for the nearly incompressible case with optimal rates of convergence. The crucial step is to establish the discrete Korn's inequality, yielding the coercivity of the discrete bilinear form. We also provide a unified locking-free scheme both for the conforming and nonconforming VEMs in the lowest order case. Numerical results validate the feasibility and effectiveness of the proposed numerical algorithms.
	\end{abstract}

	\begin{keyword}
	Virtual element method, Linear elasticity, Locking-free, Reduced integration technique, discrete Korn's inequality.
	\end{keyword}		
	
\end{frontmatter}

\section{Introduction}
\label{Sec1}

The linear elasticity problem is of fundamental importance in elastic mechanics. The design and analysis of its numerical algorithms can help to solve or analyze more complex engineering problems.
The Lam\'{e} constant $\lambda$ characterizing the compressibility of the underlying materials poses a challenge for numerical computations. For small $\lambda$, this problem can be treated as the Poisson equation in vector form.
However, when it tends to infinity or the material is nearly incompressible, it becomes difficult to design a low order finite element methods (FEMs) with parameter-free or locking-free convergence.
For this reason, several methods or techniques have been developed in the literature to tackle with this problem.
One can refer to \cite{Brenner-Sung-1992,Hughes-Cohen-Haroun-1978,Malkus-Hughes-1978}
and the references therein for details.

The virtual element method (VEM) is a new numerical method proposed in recent years
as a generalization of the finite element method on polygonal or polyhedral meshes
(cf. \cite{Ahmad-Alsaedi-Brezzi-Marini-2013,Beirao-Brezzi-Cangiani-Manzini-2013, Beirao-Brezzi-Marini-Russo-2014}).
Compared with the FEM, it may be much easier to construct a locking-free virtual element
due to the flexibility of the construction of the virtual element spaces.
The conforming VEMs for the linear elasticity problem are first proposed in \cite{Beirao-Brezzi-Marini-2013},
where a locking-free analysis is carried out for the virtual element spaces of order $k\ge 2$. The order requirement is to
ensure the so-called discrete inf-sup condition and the optimal convergence. For the lowest-order case, a Bernardi-Raugel type VEM can be found in \cite{Tang-Liu-Zhang-Feng-2020}, where the uniform convergence is achieved by adding extra degrees of freedom so that the inf-sup condition can be satisfied easily.
Nonconforming VEMs for the linear elasticity problems are first introduced in \cite{Zhang-Zhao-Yang-Chen-2019}
for the pure displacement/traction formulation in two or three dimensions.
The proposed method is robust with respect to the Lam\'{e} constant for $k\ge2$, which, however, may be unstable for $k=1$ since the discrete Korn's inequality fails in the lowest-order case. For this reason, the authors in \cite{Kwak-Park-2021} present two kinds of lowest-order VEMs with consistent convergence, in which the first one is achieved by introducing a special stabilization term to ensure the discrete Korn's inequality, and the second one can be seen as an extension of the
idea of Kouhia and Stenberg suggested in \cite{Kouhia-Stenberg-1995} to the virtual element method.
Some other VEMs for elasticity problems in two and three dimensions can be found in
\cite{Artioli-Miranda-Lovadina-Patruno-2017,Artioli-Miranda-Lovadina-Patruno-2018,Beirao-Lovadina-Mora-2015,
Berbatov-Lazarov-Jivkov-2021,Caceres-Gatica-Sequeira-2019,Dassi-Lovadina-Visinoni-2020,
Dhanush-Natarajan-2019,Gain-Talischi-Paulino-2014,Mora-Rivera-2020,
Reddy-Huyssteen-2019,Zhang-Feng-2018}.

In this paper, we intend to generalize the idea in \cite{Falk-1991} to construct a lowest-order nonconforming locking-free VEM for solving the linear elasticity problems. The technique is referred to as the reduced integration technique in the literature of FEMs (cf. \cite{Hughes-Cohen-Haroun-1978,Malkus-Hughes-1978,Brenner-Sung-1992}), which is shown to be effective to construct parameter-free methods for both the conforming and nonconforming finite elements.

We end this section by introducing some notations and symbols frequently used in this paper.  For a bounded Lipschitz domain $D$, the symbol $( \cdot , \cdot )_D$ denotes the $L^2$-inner product on $D$, $\|\cdot\|_{0,D}$ denotes the $L^2$-norm, and $|\cdot|_{s,D}$ is the $H^s(D)$-seminorm. For all integer $k\ge 0$, $\mathbb{P}_k(D)$ is the set of polynomials of degree $\le k$ on $D$. For the vector-valued functions or spaces, we use the bold symbols, such as $\bb{u}$, $\bb{v}$, $\bb{L}^2(\Omega)$, $\bb{H}^1(\Omega)$, etc.
Moreover, for any two quantities $a$ and $b$, ``$a\lesssim b$" indicates ``$a\le C b$" with the hidden constant $C$ independent of the mesh size, and ``$a\eqsim b$" abbreviates ``$a\lesssim b\lesssim a$".

\section{The pure traction problem of linear elasticity} \label{sec:traction}

Let $\Omega\subset \mathbb{R}^2$ be a convex polygonal domain. Given an external force $\bb{f}$, the pure traction problem of linear elasticity is to find the displacement field $\bb{u} = (u_1,u_2)^\intercal$ such that
\begin{equation}
\label{model}
\begin{cases}
- {\rm div}~\bb{\sigma}(\bb{u}) = \bb{f}  \quad & \text{in} ~~\Omega, \\
\bb{\sigma}(\bb{u})\bb{n} = \bb{0} \quad & \mbox{on} ~~\partial\Omega,
\end{cases}
\end{equation}
where $\bb{n}$ denotes the exterior unit vector normal to $\partial\Omega$. The constitutive relation for linear elasticity is
\[\bb{\sigma}(\bb{u}) = 2\mu\bb{\varepsilon}(\bb{u}) + \lambda({\rm div} \bb{u})\bb{I},\]
where $\bb{\sigma} = (\sigma_{ij})$ and $\bb{\varepsilon} = (\varepsilon_{ij})$ are the second order stress and strain tensors, respectively, satisfying $\varepsilon_{ij} = \frac{1}{2}(\partial_i u_j + \partial_j u_i)$, $\lambda$ and $\mu$ are the Lam\'{e} constants, $\bb{I}$ is the identity matrix, and ${\rm div}~\bb{u} = \partial_1 u_1 + \partial_2 u_2$. Note that the problem \eqref{model} is solvable if the following compatibility condition is satisfied:
\[\int_\Omega \bb{f}\cdot\bb{v}{\rm d}x = 0, \quad \bb{v} \in RM(\Omega):= \{ \bb{v} \in \bb{H}^1(\Omega): \bb{\varepsilon}(\bb{v}) = \bb{O} \}.\]

We introduce the space
\begin{equation}\label{Htilde}
\bb{V}={\widetilde {\bb{H}}^1}(\Omega) := \Big\{ \bb{v} \in \bb{H}^1(\Omega): \int_{\partial \Omega} \bb{v} \text{d}s = \bb{0},\quad \int_\Omega  {\rm rot}~\bb{v} \text{d}x = 0 \Big\},
\end{equation}
where ${\rm rot} ~\bb{v} = \nabla  \times \bb{v} = \partial_1v_2 - \partial_2v_1$, which is slightly different from the usually used space ${\widehat {\bb{H}}^1}(\Omega)$ given by (cf. \cite{BrennerScott2008})
\begin{equation}\label{Hhat}
\bb{V}={\widehat {\bb{H}}^1}(\Omega) := \Big\{ \bb{v} \in \bb{H}^1(\Omega): \int_{\Omega} \bb{v} \text{d}s = \bb{0},\quad \int_\Omega  {\rm rot}~\bb{v} \text{d}x = 0 \Big\},
\end{equation}
 with the first constraint replaced by an integrand on the whole domain $\Omega$. The choice here is considered for two reasons, one is to ensure the interpolation of the VEM function lies in the underlying virtual element space, and the other is to make the terms associated with Lagrange multipliers computable in the implementation. One can refer to Subsection \ref{subsec:implementation} for details.
For the pure traction problem, it is well-known that the following Korn's inequality
\begin{equation}\label{Korn2}
|\bb{v}|_1 \le C \|\bb{\varepsilon}(\bb{v})\|_0
\end{equation}
holds for all $\bb{v}\in {\widehat {\bb{H}}^1}(\Omega)$. One can prove that it is also valid for $\bb{v}\in {\widetilde {\bb{H}}^1}(\Omega)$ by checking the proof of Theorem 11.2.12 in \cite{BrennerScott2008}, where only the second constraint of \eqref{Htilde} is utilized, and the first constraint is just to remove the additive constant vector in $RM(\Omega)$.

Let $\bb{f}\in\bb{L}^2(\Omega)$. The variational formulation of \eqref{model} is to find $\bb{u}\in\bb{V}$ such that
\begin{equation}
\label{Elasticity_TensorEq_Var}
a(\bb{u}, \bb{v}) = (\bb{f}, \bb{v}), \quad \bb{v} \in \bb{V},
\end{equation}
where
\begin{equation}\label{bilinear}
a(\bb{u}, \bb{v})
= 2\mu (\bb{\varepsilon}(\bb{u}), \bb{\varepsilon}(\bb{v}))
+ \lambda ({\rm div}~\bb{u}, {\rm div}~\bb{v}),
\end{equation}
and
\[
(\bb{f}, \bb{v})
= \int_{\Omega} \bb{f} \bb{v} \mathrm{d}x .
\]
For ease of presentation, we introduce the following notations:
\[a_\mu(\bb{u}, \bb{v}) = (\bb{\varepsilon}(\bb{u}), \bb{\varepsilon}(\bb{v})), \qquad
a_\lambda(\bb{u}, \bb{v}) = ({\rm div}~\bb{u}, {\rm div}~\bb{v}).\]
In view of the Korn's inequality \eqref{Korn2}, one easily derives the the boundedness and coercivity of $a(\cdot, \cdot)$:
\begin{align*}
a(\bb{v}, \bb{w}) & \le  C |\bb{v}|_1|\bb{w}|_1, \quad \bb{v},\bb{w}\in \bb{V},  \\
a(\bb{v},\,\bb{v}) & \ge C |\bb{v}|_1^2 ,\quad \bb{v}\in \bb{V},
\end{align*}
which imply that the problem \eqref{model} has a unique solution by the Lax-Milgram lemma.

It is also well known that the regularity estimate
\begin{equation}\label{regularity}
\|\bb{u}\|_2 + \lambda\|{\rm div}\bb{u}\|_1 \lesssim \|\bb{f}\|_0
\end{equation}
holds for any convex polygonal domain $\Omega$, see \cite{Brenner-Sung-1992} for example.

\section{The locking-free virtual element method for the pure traction problem}
\label{Sec3}

In this section, we propose a locking-free virtual element method by using the reduced integration technique. This is the generalization of the locking-free finite element method introduced in \cite{Falk-1991} to the context of the virtual element methods.

\subsection{Mesh assumption and the virtual element space}

Let $\{\mathcal{T}_h\}$ be a family of decompositions of $\Omega$ into polygonal elements. The generic element is denoted by $K$ with diameter $h_K={\rm diam}(K)$.  For clarity of presentation, we first work on meshes satisfying a strong mesh assumption than the one given in \cite{Chen-Huang-2018,Brezzi-Buffa-Lipnikov-2009}:
\begin{enumerate}
	\item [{\bf C0}.]
	For each element $K$, there exist positive constant $\gamma_1, \gamma_2$ independent of $h_K$ such that  (cf. \cite{Beirao-Brezzi-Cangiani-Manzini-2013})
	\begin{itemize}
		\item
		$K$ is star-shaped with respect to a disc in $K$ with radius $\geq \gamma_1 {h_K}$;
		
		\item the distance between any two vertices of $K$ is $\ge \gamma_2 h_K$.
	\end{itemize}
\end{enumerate}

\begin{figure}[!htb]
  \centering
  \includegraphics[scale=0.5]{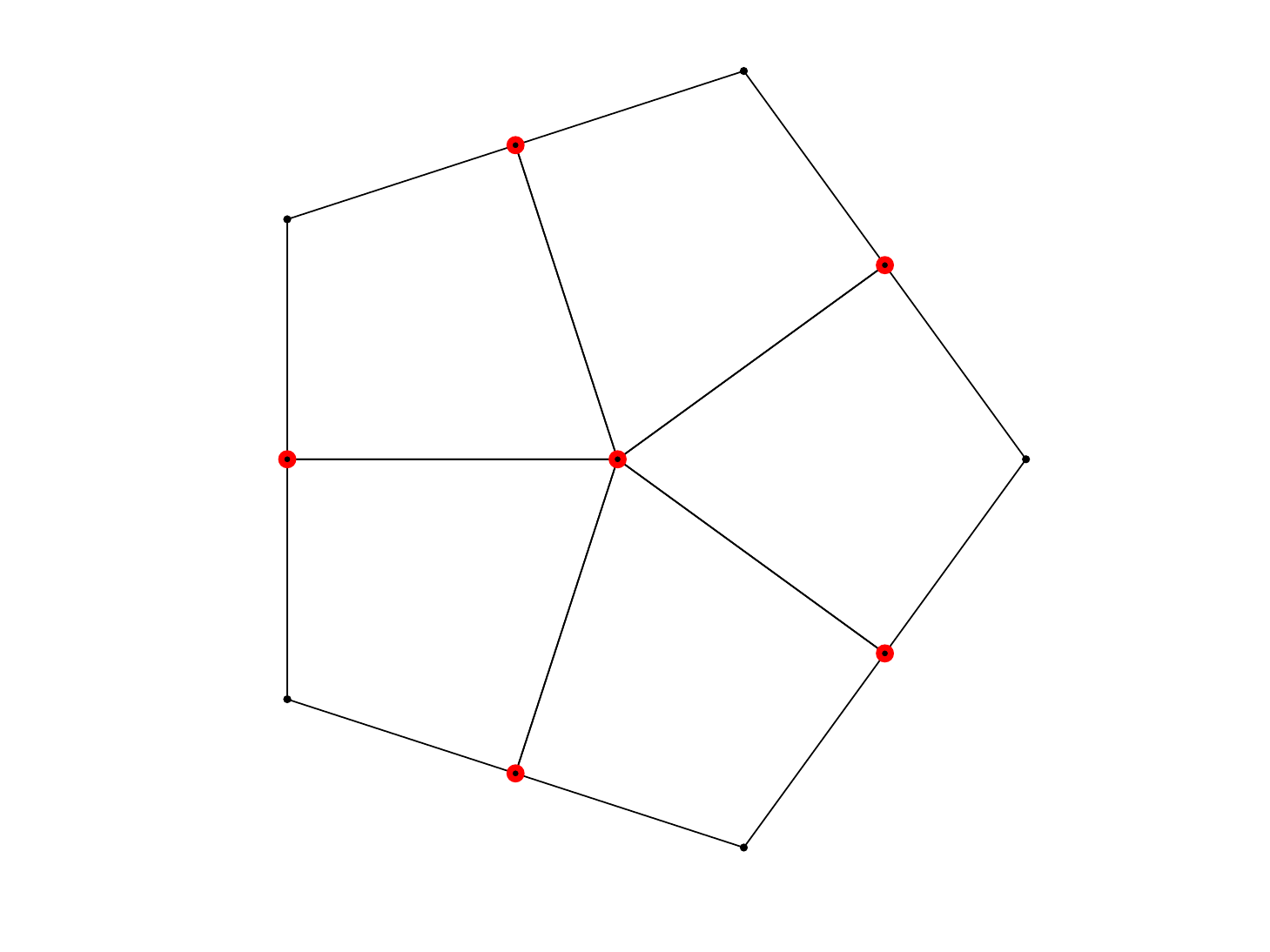}\\
  \caption{The refined element satisfying the mesh assumption {\bf C0}. The small quadrilateral element of the fine mesh and the large polygonal element of the original mesh are denoted by $E$ and $K$, respectively.}\label{Fig1_Polygon_Refine}
\end{figure}

Under the above assumption, one easily finds that the mesh can be refined by connecting the barycenter of the disc with the midpoints of the edges of $K$ as depicted in Fig.~\ref{Fig1_Polygon_Refine}. The resulting finer mesh will be denoted by $\mathcal{T}_h^{*}$ with generic element given by $E$.

In this paper, we consider the lowest order nonconforming virtual element space defined on the finer mesh $\mathcal{T}_h^*$. To this end, let's briefly review the construction proposed in \cite{Beirao-Brezzi-Cangiani-Manzini-2013}. The local virtual element space on $E$ is defined as
\begin{equation}\label{V1E}
\bb{V}_1(E)
=\big\{
\bb{v} \in \bb{H}^1(E):  \Delta \bb{v} = \bb{0} ~~\mbox{in}~~E, \quad
\partial_{\bb{n}} \bb{v} |_e \in (\mathbb{P}_0(e))^2,~~ e\subset \partial E
\big\}.
\end{equation}
The corresponding degrees of freedoms (d.o.f.s) are:
\begin{itemize}
	\item $\chi_i\in \bb{\chi}$:  the moments of $\bb{v}$ on every edge of $E$,
	\[\chi_i(\bb{v}) = \frac{1}{|e|} \int_e \bb{v} {\rm d}s, \quad e \subset \partial E.\]
\end{itemize}
Let $e\subset \partial E$ be the common edge for elements $E = E^-$ and $E^+$, and let $v$ be a scalar function defined on $e$. We introduce the jump of $v$ on $e$ by $[v] = v^- - v^+$, where $v^-$ and $v^+$ are the traces of $v$ on $e$ from the interior and exterior of $E$, respectively. For the boundary edge $e$, set $[v]|_e = v|_e$. We define the global virtual element space $\bb{V}_h$ for the pure traction problem by
\begin{align}
\bb{V}_h = \Big\{\bb{v}_h\in \bb{L}^2(\Omega): \bb{v}_h|_E \in \bb{V}_1(E), ~E \in \mathcal{T}_h^*, \quad
\int_e [\bb{v}_h] {\rm d}s = \bb{0},~ e \in \mathcal{E}_h^{*,0}, \nonumber \\
 \int_{\partial \Omega} \bb{v}_h {\rm d}s = \bb{0}, \quad
 \int_\Omega {\rm rot}_h~\bb{v}_h {\rm d}x = 0 \Big\}, \label{Vh}
\end{align}
where ${\rm rot}_h$ is the piecewise version of ${\rm rot}$ and $\mathcal{E}_h^{*,0}$ denotes the set of interior edges of $\mathcal{T}_h^*$.

Let $\bb{v}_I$ be the interpolation of $\bb{v}\in \bb{V}_h$. One can check that
 \[\int_{\partial \Omega} \bb{v}_I {\rm d}s = \bb{0}, \quad  \int_\Omega {\rm rot}_h~\bb{v}_I {\rm d}x = 0\]
 by using the integration by parts, which shows $\bb{v}_I \in \bb{V}_h$.
For later use, we also introduce the conforming virtual element space
\begin{equation}\label{V1cE}
\bb{V}_1^c(E) = \left\{ \bb{v} \in \bb{H}^1(E): \Delta \bb{v} = \bb{0} ~~{\rm in}~~E,\quad  \bb{v}|_e \in (\mathbb{P}_1(e))^2,~~ e\subset \partial E \right\}.
\end{equation}
The global conforming virtual element space is defined as
\begin{equation}\label{Vh0}
\bb{V}_h^0 = \{\bb{v}\in \bb{H}_0^1(\Omega): \bb{v} \in \bb{V}_1^c(E), ~E \in \mathcal{T}_h^* \},
\end{equation}
with d.o.f.s given by the values at the vertices of $\mathcal{T}_h^*$.

\subsection{The virtual element method based on the reduced integration technique}

For the standard lowest-order nonconforming VEMs on the coarse mesh $\mathcal{T}_h$, the discrete Korn's inequality may be invalid. We remark that when $\mathcal{T}_h$ is a triangulation of $\Omega$, the nonconforming virtual element is exactly the Crouzeix-Raviart element. In this case, the discrete Korn's inequality does not hold as proved in \cite{Falk-1991} by using a dimension-counting argument. For this reason, Falk replaces the piecewise strain tensor $\bb{\varepsilon}_h$ by a new operator $\bb{\varepsilon}_h^*$, and derive a modified finite element method with consistent convergence. The technique is referred to as the reduced integration technique in \cite{Brenner-Sung-1992}.

In this subsection, we intend to apply the reduced integration technique to the nonconforming VEMs, so as to derive a parameter-free numerical method.
Let $\bb{\varepsilon}_h$ be the piecewise strain tensor defined on the fine mesh $\mathcal{T}_h^*$. One can check that
\[\bb{\varepsilon}_h(\bb{u}) = \nabla_h \bb{u} - \frac{1}{2} {\rm rot}_h ~\bb{u}~\bb{\mathcal{X}}, \qquad
\bb{\mathcal{X}} = \begin{bmatrix} 0 & -1 \\ 1 & 0 \end{bmatrix},\]
where $\nabla_h$ and ${\rm rot}_h$ are the piecewise version of $\nabla$ and ${\rm rot}$, respectively.
The main idea is to replace the operator ${\rm rot}_h$ by the $L^2$ projection $\Pi_0 {\rm rot}_h$, which is defined on the coarse mesh $\mathcal{T}_h$, not on the fine mesh $\mathcal{T}_h^*$. Let $\Pi_0^K {\rm rot} = \Pi_0 {\rm rot}_h|_K$, where $K\in\mathcal{T}_h$. The desired operator is then piecewise defined as follows:
\begin{equation}
\begin{cases}
  \Pi_0^K{\rm rot}: \bb{V}_1(K) \to \mathbb{P}_0(K), \quad \bb v \mapsto \Pi_0^K{\rm rot}~\bb v, \\
  \int_K \Pi_0^K {\rm rot}~\bb v p {\rm d}x = \int_K {\rm rot}~\bb v p {\rm d}x,~~p \in \mathbb{P}_0(K), \label{L2ProRot}
\end{cases}
\end{equation}
where $\bb{V}_1(K) = \bb{V}_h|_K$. By the continuity of the d.o.f.s, one has
\[\int_K {\rm rot}~\bb v {\rm d}x =  \int_{\partial K} \bb v \cdot \bb{t}_K {\rm d}s, \quad \bb{v}\in \bb{V}_1(K),\]
where $\bb{t}_K = (-n_2, n_1)^\intercal$ is the anti-clockwise tangential along $\partial K$.
It is evident that the above integration can be computed by using the given d.o.f.s on the fine mesh. We finally introduce the adaptation of $\bb{\varepsilon}_h$ as
\begin{equation}\label{epsstar}
\bb{\varepsilon}_h^*(\bb{v}) = \nabla_h \bb{v} - \frac{1}{2} \Pi_0 {\rm rot}_h ~\bb{v}~\bb{\mathcal{X}}, \qquad
\bb{\mathcal{X}} = \begin{bmatrix} 0 & -1 \\ 1 & 0 \end{bmatrix}, \qquad \bb{v} \in \bb{V}_h.
\end{equation}
A direct calculation gives
\begin{align*}
(\nabla_h \bb{v} , \Pi_0 {\rm rot}_h ~\bb{w})_K
& = \int_K (-\partial_2v_1  + \partial_1v_2  ) \Pi_0 {\rm rot}_h ~\bb{w} {\rm d}x  \\
& = \int_K {\rm rot}_h ~\bb{v} \Pi_0 {\rm rot}_h ~\bb{w} {\rm d}x = (\Pi_0{\rm rot}_h ~\bb{v}, \Pi_0 {\rm rot}_h ~\bb{w})_K,
\end{align*}
and similarly,
\[(\nabla_h \bb{w} , \Pi_0 {\rm rot}_h ~\bb{v})_K = (\Pi_0{\rm rot}_h ~\bb{w}, \Pi_0 {\rm rot}_h ~\bb{v})_K.\]
We thus obtain
\begin{align}
(\bb{\varepsilon}_h^*(\bb{v}), \bb{\varepsilon}_h^*(\bb{w}))_K
& = (\nabla_h \bb{v}, \nabla_h \bb{w})_K + \frac{1}{4} (\Pi_0 {\rm rot}_h ~\bb{v} , \Pi_0 {\rm rot}_h ~\bb{w})_K \nonumber\\
&  \quad - \frac{1}{2} (\nabla_h \bb{v} , \Pi_0 {\rm rot}_h ~\bb{w})_K - \frac{1}{2} (\nabla_h \bb{w} , \Pi_0 {\rm rot}_h ~\bb{v})_K \nonumber\\
& = (\nabla_h \bb{v}, \nabla_h \bb{w})_K - \frac{1}{2} (\Pi_0 {\rm rot}_h ~\bb{v}, \Pi_0 {\rm rot}_h ~\bb{w})_K. \label{rotstar}
\end{align}

Let ${\Pi}_1^E \colon \bb{H}^1(E)\to (\mathbb{P}_1(E))^2$ be the elliptic projection of $\nabla$, which satisfies
\begin{equation}
	\label{EllipticProj}
	\begin{cases}
		(\nabla {\Pi}_1^E\bb{v} , \nabla \bb{p})_E = (\nabla \bb{v}, \nabla \bb{p})_E, \quad \bb{p}\in (\mathbb{P}_1(E))^2, \\
    \int_{\partial{E}}{\Pi}_1^E\bb{v}\,\mathrm{d}s
		= \int_{\partial{E}}\bb{v}\,\mathrm{d}s.
	\end{cases}
\end{equation}
Then we are able to introduce a discrete bilinear form with respect to $a_\mu(\bb{v}, \bb{w})$ as:
\[a_{\mu,h}(\bb{v}, \bb{w}) = \sum\limits_{K\in \mathcal{T}_h} a_{\mu,h}^K(\bb{v}, \bb{w}), \]
where
\[a_{\mu,h}^K(\bb{v}, \bb{w})
= \sum\limits_{E\subset K} \Big( (\nabla {\Pi}_1^E\bb{v} , \nabla {\Pi}_1^E\bb{w} )_E
+ S^E(\bb{v}-\nabla  \Pi_1^E\bb{v} , \bb{w}-\nabla \Pi_1^E\bb{w}) \Big)
- \frac{1}{2} ( \Pi_0^K {\rm rot}~\bb{v}, \Pi_0^K {\rm rot}~\bb{w} )_K\]
and
\[S^E(\bb{v}, \bb{w}) = \bb{\chi}(\bb{v})\cdot \bb{\chi}(\bb{w})\]
is the stabilization term frequently used in the literature of VEMs.

As usual, the second term of \eqref{bilinear} is discretized by
\[a_{\lambda,h}(\bb{v},\bb{w}) = \sum\limits_{E\in \mathcal{T}_h^*} a_{\lambda,h}^E(\bb{v}, \bb{w}), \qquad a_{\lambda,h}^E(\bb{v}, \bb{w}) = (\Pi_0^E{\rm div}~\bb{v}, \Pi_0^E{\rm div}~\bb{w})_E,\]
where the $L^2$ projection $\Pi_0^E {\rm div}$ is defined as
\begin{equation}\label{L2ProDiv}
\begin{cases}
  \Pi_0^E{\rm div}: \bb{V}_1(E) \to \mathbb{P}_0(E), \quad \bb v \mapsto \Pi_0^E{\rm div}~\bb v, \\
  \int_E \Pi_0^E{\rm div}~\bb vp {\rm d}x = \int_E {\rm div}~\bb vp {\rm d}x,~~p \in \mathbb{P}_0(E).
\end{cases}
\end{equation}

The locking-free nonconforming VEM of the variational problem \eqref{Elasticity_TensorEq_Var} is: find $\bb{u}_h\in\bb{V}_h$ such that
\begin{equation}\label{VEM}
a_h(\bb{u}_h,\,\bb{v}_h) = \langle \bb{f}_h,\,\bb{v}_h \rangle, \quad  \bb{v}_h\in\bb{V}_h,
\end{equation}
where
\begin{equation}\label{disbilinear}
a_h(\bb{u}_h,\,\bb{v}_h) = 2\mu a_{\mu,h}(\bb{u}_h, \bb{v}_h) + \lambda a_{\lambda,h}(\bb{u}_h, \bb{v}_h),
\end{equation}
and the approximation of the right hand side is given by (cf. \cite{Zhang-Zhao-Yang-Chen-2019})
\begin{equation}\label{rhs}
\langle \bb{f}_h, \bb{v}_h \rangle = (\bb{f}, P_h \bb{v}_h),
\end{equation}
where
\[P_h \bb{v}_h|_E = \frac{1}{|\partial E|} \int_{\partial E} \bb{v}_h {\rm d}s.\]

\subsection{Coercivity of the discrete bilinear form} \label{subsec:coercivity}

We first present an equivalence formula described as follows.
\begin{theorem}\label{thm:normeq}
For all $\bb{v}\in \bb{V}_h$, there holds
\[
a_{\mu,h}^K(\bb{v}, \bb{v}) \eqsim \|\bb{\varepsilon}_h^*(\bb{v})\|_{0,K}^2.
\]
\end{theorem}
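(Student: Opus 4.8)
The plan is to expand both sides of the claimed equivalence over the sub-elements $E \subset K$ and reduce the statement to two ingredients: a standard VEM norm equivalence on each $E$, and the identity \eqref{rotstar} that was already derived. Recall from the definition that
\[
a_{\mu,h}^K(\bb{v}, \bb{v})
= \sum_{E\subset K}\Big( |\Pi_1^E\bb{v}|_{1,E}^2 + S^E(\bb{v}-\nabla\Pi_1^E\bb{v},\bb{v}-\nabla\Pi_1^E\bb{v}) \Big)
- \tfrac12\,\|\Pi_0^K{\rm rot}~\bb{v}\|_{0,K}^2,
\]
while, summing \eqref{rotstar} with $\bb{w}=\bb{v}$ over $E\subset K$ and noting that $\Pi_0{\rm rot}_h|_K = \Pi_0^K{\rm rot}$ is constant on $K$,
\[
\|\bb{\varepsilon}_h^*(\bb{v})\|_{0,K}^2
= \sum_{E\subset K} |\bb{v}|_{1,E}^2 - \tfrac12\,\|\Pi_0^K{\rm rot}~\bb{v}\|_{0,K}^2.
\]
So the $-\tfrac12\|\Pi_0^K{\rm rot}~\bb{v}\|_{0,K}^2$ term is common to both sides, and the whole theorem will follow once I show the elementwise equivalence
\[
|\Pi_1^E\bb{v}|_{1,E}^2 + S^E(\bb{v}-\nabla\Pi_1^E\bb{v},\bb{v}-\nabla\Pi_1^E\bb{v}) \;\eqsim\; |\bb{v}|_{1,E}^2
\]
for each sub-element $E$ — but with one caveat, explained below, about whether subtracting the common term preserves the equivalence.

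First I would establish the elementwise equivalence. This is the classical argument of Beir\~ao da Veiga--Brezzi--Cangiani--Manzini: the bilinear form $(\nabla\Pi_1^E\cdot,\nabla\Pi_1^E\cdot)_E + S^E(\,\cdot - \nabla\Pi_1^E\cdot,\,\cdot - \nabla\Pi_1^E\cdot)$ is, up to the fixed rigid-translation kernel removed by the boundary-average condition in \eqref{EllipticProj}, spectrally equivalent to $|\cdot|_{1,E}^2$ on $\bb{V}_1(E)$. The upper bound uses continuity of $\Pi_1^E$ in the $H^1$-seminorm plus a trace/scaling estimate bounding $\bb{\chi}(\bb{v})$ by $h_E^{-1}\|\bb{v}\|_{0,\partial E}$, combined with a Poincar\'e inequality; the lower bound uses that $S^E$ controls the component of $\bb{v}-\nabla\Pi_1^E\bb{v}$ that is invisible to $\nabla\Pi_1^E$. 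Here the geometric assumption {\bf C0}, which gives shape-regularity of every $E$ uniformly in $h$, is what makes the equivalence constants independent of the mesh.

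The main obstacle is that subtracting the common nonnegative quantity $\tfrac12\|\Pi_0^K{\rm rot}~\bb{v}\|_{0,K}^2$ from two equivalent quantities does not in general preserve equivalence — one needs that this term is, up to a fixed fraction strictly less than one, dominated by $\sum_{E\subset K}|\bb{v}|_{1,E}^2$, and uniformly so. The key estimate is
\[
\|\Pi_0^K{\rm rot}~\bb{v}\|_{0,K}^2 \le \|{\rm rot}_h~\bb{v}\|_{0,K}^2 \le \sqrt{2}\sum_{E\subset K}|\bb{v}|_{1,E}^2,
\]
using that $\Pi_0^K$ is an $L^2$-projection (hence a contraction) and that $|{\rm rot}~\bb{w}|^2 \le 2|\nabla\bb{w}|^2$ pointwise. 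Actually $\|\bb{\varepsilon}_h^*(\bb{v})\|_{0,K}^2 \ge \tfrac12\sum_{E\subset K}|\bb{v}|_{1,E}^2$ follows directly from this, giving the nontrivial direction of the lower bound; the upper bound $\|\bb{\varepsilon}_h^*(\bb{v})\|_{0,K}^2 \le \sum_{E\subset K}|\bb{v}|_{1,E}^2$ is immediate since we are subtracting a nonnegative term. For $a_{\mu,h}^K$, combining the elementwise equivalence with the same domination of the rot-term yields $a_{\mu,h}^K(\bb{v},\bb{v}) \eqsim \sum_{E\subset K}|\bb{v}|_{1,E}^2$, and chaining the two equivalences through $\sum_{E\subset K}|\bb{v}|_{1,E}^2$ closes the proof.
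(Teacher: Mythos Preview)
You correctly flag the subtlety—subtracting the common nonnegative term $\tfrac12\|\Pi_0^K{\rm rot}\,\bb{v}\|_{0,K}^2$ from two equivalent quantities need not preserve equivalence—but your resolution fails. The pointwise bound is $|{\rm rot}\,\bb{w}|^2 \le 2|\nabla\bb{w}|^2$, not with constant $\sqrt 2$; with the correct constant you only obtain $\tfrac12\|\Pi_0^K{\rm rot}\,\bb{v}\|_{0,K}^2 \le \sum_{E\subset K}|\bb{v}|_{1,E}^2$, which gives no strictly positive lower bound for the difference. The intermediate equivalence $\|\bb{\varepsilon}_h^*(\bb{v})\|_{0,K}^2 \eqsim \sum_{E\subset K}|\bb{v}|_{1,E}^2$ you try to chain through is in fact false: for the rigid rotation $\bb{v}(x,y)=(-y,x)^\intercal$ (linear, hence in every $\bb{V}_1(E)$) one has $\nabla\bb{v}=\bb{\mathcal{X}}$, $\Pi_0^K{\rm rot}\,\bb{v}=2$, so $\bb{\varepsilon}_h^*(\bb{v})=0$ while $\sum_{E}|\bb{v}|_{1,E}^2=2|K|\ne 0$. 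The same $\bb{v}$ gives $a_{\mu,h}^K(\bb{v},\bb{v})=0$, so neither side of the theorem is comparable to $\sum_E|\bb{v}|_{1,E}^2$.

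The fix, used implicitly in the paper, is that the subtracted term depends only on the projected part. Taking $\nabla\bb{p}=\bb{\mathcal{X}}$ in the elliptic-projection identity gives $\int_E{\rm rot}\,\bb{v}=\int_E{\rm rot}\,\Pi_1^E\bb{v}$, hence $\Pi_0^K{\rm rot}_h\bb{v}=\Pi_0^K{\rm rot}_h(\Pi_1\bb{v})$ where $\Pi_1\bb{v}|_E:=\Pi_1^E\bb{v}$. Setting $P=\sum_E|\Pi_1^E\bb{v}|_{1,E}^2$, $Q=\sum_E|\bb{v}-\Pi_1^E\bb{v}|_{1,E}^2$, $S=\sum_ES^E(\bb{v}-\Pi_1^E\bb{v},\bb{v}-\Pi_1^E\bb{v})$ and $B=\tfrac12\|\Pi_0^K{\rm rot}\,\bb{v}\|_{0,K}^2$, the identity \eqref{rotstar} applied to $\Pi_1\bb{v}$ yields $P-B=\|\bb{\varepsilon}_h^*(\Pi_1\bb{v})\|_{0,K}^2\ge 0$. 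By Pythagoras for the elliptic projection, $a_{\mu,h}^K(\bb{v},\bb{v})=(P-B)+S$ and $\|\bb{\varepsilon}_h^*(\bb{v})\|_{0,K}^2=(P-B)+Q$; since $P-B\ge 0$ and $S\eqsim Q$ (the stabilization estimate you cite), the equivalence follows directly.
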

\begin{proof}
According to the norm equivalence for the nonconforming virtual element functions, we have (cf. \cite{Chen-HuangX-2020,Huang-2020})
\[\|\bb{\chi}(\bb{v}-\Pi_1^E\bb{v})\|_{l^2} \eqsim |\bb{v}-\Pi_1^\nabla\bb{v}|_{1,E}.\]
By definition of the elliptic projection,
\begin{align*}
a_{\mu,h}^K(\bb{v}, \bb{v})
& \eqsim \sum\limits_{E\subset K} \Big( (\nabla {\Pi}_1^E\bb{v} , \nabla {\Pi}_1^E\bb{v} )_E
  + (\nabla(\bb{v} - {\Pi}_1^E\bb{v}) , \nabla (\bb{v} -{\Pi}_1^E\bb{v}) \Big)
- \frac{1}{2} ( \Pi_0^K {\rm rot}~\bb{v}, \Pi_0^K {\rm rot}~\bb{v} )_K \nonumber\\
& = \sum\limits_{E\subset K} (\nabla\bb{v}, \nabla \bb{v})_E
- \frac{1}{2} ( \Pi_0^K {\rm rot}~\bb{v}, \Pi_0^K {\rm rot}~\bb{v} )_K \nonumber\\
& = (\nabla_h \bb{v}, \nabla_h \bb{v})_K
- \frac{1}{2} ( \Pi_0^K {\rm rot}~\bb{v}, \Pi_0^K {\rm rot}~\bb{v} )_K.
\end{align*}
The desired formula follows from \eqref{rotstar}.
\end{proof}

For the coercivity of the proposed VEM, it remains to establish a discrete version of the Korn's inequality \eqref{Korn2} described in Theorem \ref{thm:disKorn}. The key step is to prove the following result.

\begin{lemma} \label{lem:vpexist}
Let $p\in \mathbb{P}_0(\mathcal{T}_h)$ be piecewise constant on the coarse mesh $\mathcal{T}_h$, satisfying $\int_\Omega p {\rm d}x = 0$. Then there exists a conforming virtual element function $\bb{v} \in \bb{V}_h^0$ defined on the fine mesh $\mathcal{T}_h^*$ such that
\[\int_\Omega {\rm div} \bb{v} q {\rm d} x = \int_\Omega p q {\rm d} x, \quad q \in \mathbb{P}_0(\mathcal{T}_h)\]
and
\[\|\bb{v}\|_1 \le C \|p\|_0,\]
where the constant $C$ is independent of $\bb{v}$ and $p$.
\end{lemma}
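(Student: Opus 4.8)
The plan is to prove Lemma~\ref{lem:vpexist} by combining the classical inf-sup (Ne\v{c}as/Bogovski\u{\i}) stability for the continuous problem with the fact that the conforming virtual element space $\bb{V}_h^0$ on the \emph{fine} mesh $\mathcal{T}_h^*$ is rich enough to realize an arbitrary piecewise-constant divergence on the \emph{coarse} mesh $\mathcal{T}_h$. First, since $p\in\mathbb{P}_0(\mathcal{T}_h)\subset\bb{L}^2(\Omega)$ with zero mean, the surjectivity of the divergence operator ${\rm div}\colon \bb{H}_0^1(\Omega)\to L_0^2(\Omega)$ (equivalently the continuous inf-sup condition, see e.g. \cite{BrennerScott2008}) gives a function $\bb{w}\in\bb{H}_0^1(\Omega)$ with ${\rm div}\,\bb{w}=p$ in $\Omega$ and $\|\bb{w}\|_1\le C\|p\|_0$, with $C$ depending only on $\Omega$. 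The target function $\bb{v}$ will be obtained by a suitable quasi-interpolation of $\bb{w}$ into $\bb{V}_h^0$ that preserves the coarse-mesh moments of the divergence.

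Second, I would construct $\bb{v}=I_h\bb{w}$ where $I_h$ is a Scott--Zhang-type (or Cl\'ement-type) quasi-interpolation operator mapping $\bb{H}_0^1(\Omega)$ into the lowest-order conforming virtual element space $\bb{V}_h^0$ on $\mathcal{T}_h^*$. Such an operator exists and is $H^1$-stable under the mesh regularity in {\bf C0} (which is inherited by $\mathcal{T}_h^*$): indeed, for lowest-order VEM the nodal degrees of freedom are exactly vertex values, and the standard Scott--Zhang construction applies verbatim on polygonal meshes, giving $\|I_h\bb{w}\|_1\le C\|\bb{w}\|_1$ and local approximation estimates $\|\bb{w}-I_h\bb{w}\|_{0,K}\le C h_K|\bb{w}|_{1,\omega_K}$ with $C$ depending only on $\gamma_1,\gamma_2$. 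The remaining issue is that ${\rm div}(I_h\bb{w})$ need not match $p$, even in the coarse-mesh averages; to fix this, I would correct $I_h\bb{w}$ locally on each coarse element $K$. Concretely, on each $K\in\mathcal{T}_h$ one uses the star-shaped substructure: the fine mesh $\mathcal{T}_h^*|_K$ consists of quadrilaterals sharing the barycenter of the inscribed disc, so $\bb{V}_h^0|_K$ contains enough degrees of freedom (the value at the interior vertex, together with edge-midpoint-type data through the surrounding quadrilaterals) to add a local bubble-like correction $\bb{b}_K\in\bb{V}_h^0$ supported in $\bar{K}$, vanishing on $\partial K$, with prescribed value $\int_K{\rm div}\,\bb{b}_K\,{\rm d}x$ and $\|\bb{b}_K\|_{1,K}\le C h_K^{-1}\|{\rm div}\,\bb{b}_K\|_{0,K}\cdot|K|^{1/2}$; a scaling argument on the reference configuration shows the constant depends only on the shape-regularity. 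Setting $\bb{v}=I_h\bb{w}+\sum_{K}\bb{b}_K$ with $\int_K{\rm div}\,\bb{b}_K\,{\rm d}x=\int_K(p-{\rm div}\,I_h\bb{w})\,{\rm d}x$ then yields $\int_K{\rm div}\,\bb{v}\,{\rm d}x=\int_K p\,{\rm d}x$ for every $K$, i.e. $\int_\Omega{\rm div}\,\bb{v}\,q\,{\rm d}x=\int_\Omega p\,q\,{\rm d}x$ for all $q\in\mathbb{P}_0(\mathcal{T}_h)$.

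Third, I would bound $\|\bb{v}\|_1$. The term $\|I_h\bb{w}\|_1\le C\|\bb{w}\|_1\le C\|p\|_0$ is immediate. For the correction, on each $K$,
\[
\Big|\int_K(p-{\rm div}\,I_h\bb{w})\,{\rm d}x\Big|
=\Big|\int_K{\rm div}(\bb{w}-I_h\bb{w})\,{\rm d}x\Big|
=\Big|\int_{\partial K}(\bb{w}-I_h\bb{w})\cdot\bb{n}\,{\rm d}s\Big|,
\]
since ${\rm div}\,\bb{w}=p$; a trace inequality plus the approximation estimate gives $|\int_K{\rm div}\,\bb{b}_K\,{\rm d}x|\le C h_K^{1/2}\|\bb{w}-I_h\bb{w}\|_{0,\partial K}^{1/2}\cdots$, or more cleanly, $\big(\sum_K\|{\rm div}\,\bb{b}_K\|_{0,K}^2\big)^{1/2}\le C\|\bb{w}-I_h\bb{w}\|_{0}/h + C\cdots\le C\|\bb{w}\|_1$ using $\|{\rm div}\,\bb{b}_K\|_{0,K}^2=|K|^{-1}|\int_K{\rm div}\,\bb{b}_K|^2$. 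Combining with $\|\bb{b}_K\|_{1,K}\le C h_K^{-1}|K|^{1/2}\|{\rm div}\,\bb{b}_K\|_{0,K}\le C h_K^{-1}|\int_K{\rm div}\,\bb{b}_K\,{\rm d}x|$ and summing over $K$ (the $\bb{b}_K$ have disjoint supports up to the coarse skeleton) produces $\big\|\sum_K\bb{b}_K\big\|_1\le C\|\bb{w}\|_1\le C\|p\|_0$. Hence $\|\bb{v}\|_1\le C\|p\|_0$ with $C$ independent of $p$ and $\bb{v}$.

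The main obstacle is the second step: constructing the local corrector $\bb{b}_K\in\bb{V}_h^0$ supported in a single coarse element with a controllable, prescribed mean divergence and $H^1$ norm scaling like $h_K^{-1}|K|^{1/2}$ times that divergence. This requires exploiting the specific refinement in Fig.~\ref{Fig1_Polygon_Refine} --- the interior vertex at the barycenter of the inscribed disc provides a degree of freedom whose associated nodal basis function in $\bb{V}_h^0$ is supported in $\bar K$ and has nonzero integral divergence (in fact zero, since it is single-valued $H^1$ and the divergence of a conforming VE function integrates to $\int_{\partial K}\bb{v}\cdot\bb{n}=0$ when $\bb{v}|_{\partial K}=0$!). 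So the interior-vertex basis function alone will \emph{not} work; one genuinely needs a pair of boundary-vanishing fine-mesh functions whose combined divergence has nonzero mean on $K$, which forces a careful look at the fine geometry, or alternatively a reformulation where $\bb{b}_K$ is allowed to be supported on a small patch around an interior \emph{edge} of $\mathcal{T}_h^*|_K$. Making the constant in this local construction uniform (depending only on $\gamma_1,\gamma_2$) via a compactness/scaling argument on finitely many reference configurations is the technical heart of the proof; everything else is standard inf-sup and quasi-interpolation machinery.
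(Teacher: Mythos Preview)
Your correction step has a genuine obstruction that you yourself flag but do not resolve. If $\bb{b}_K\in\bb{V}_h^0$ is supported in $\overline{K}$, then $\bb{b}_K\in\bb{H}^1_0(K)$ and the divergence theorem forces
\[
\int_K{\rm div}\,\bb{b}_K\,{\rm d}x=\int_{\partial K}\bb{b}_K\cdot\bb{n}\,{\rm d}s=0,
\]
so such a corrector can \emph{never} adjust the coarse-mesh divergence moment. Allowing $\bb{b}_K$ to leak onto neighbouring coarse elements (your ``patch around an interior edge'' idea) does not help locally either: any modification of $\int_K{\rm div}\,\bb{b}_K$ is exactly compensated on the neighbour, so you are left with a global coupled system for the corrections, and the uniform bound you want is no longer a local scaling argument. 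In short, a ``quasi-interpolant plus local bubble'' strategy cannot work here; the obstruction is structural, not technical.

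The paper bypasses correction altogether with a Fortin-type construction that exploits the specific refinement: on the fine mesh, each coarse edge $e\subset\partial K$ carries an extra vertex at its midpoint, so $\bb{v}|_e$ is piecewise linear with three nodes. The midpoint value is then \emph{chosen} so that $\int_e\bb{v}\,{\rm d}s=\int_e\bb{u}\,{\rm d}s$ (Simpson's rule makes this a one-line formula), while at the original coarse vertices and the interior barycenter one simply sets $\bb{v}=\bb{u}$. The divergence theorem then gives $\int_K{\rm div}\,\bb{v}\,{\rm d}x=\int_{\partial K}\bb{v}\cdot\bb{n}\,{\rm d}s=\int_{\partial K}\bb{u}\cdot\bb{n}\,{\rm d}s=\int_K p\,{\rm d}x$ directly, with no correction needed. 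The $H^1$ bound follows by comparing $\bb{v}$ with the nodal interpolant $I_h^0\bb{u}$: they differ only at the edge midpoints, and that difference is $\frac{2}{|e|}\int_e(\bb{u}-I_h^0\bb{u})\,{\rm d}s$, which is controlled by standard trace and interpolation estimates. The key idea you are missing is that the midpoint degree of freedom on each coarse edge is precisely what lets you match the \emph{boundary} fluxes (hence the divergence moments) rather than trying to fix them from the interior.
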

\begin{proof}
According to the well-known Lemma 11.2.3 in \cite{BrennerScott2008}, there exists $\bb{u} \in \bb{H}_0^1(\Omega)$ such that
\[
{\rm div}~\bb{u} = p, \qquad \|\bb{u}\|_1 \le C \|p\|_0.
\]
Then we only need to prove that there exists $\bb{v} \in \bb{V}_h^0$ such that
\[\int_\Omega {\rm div}~\bb{v}q {\rm d} x  = \int_\Omega {\rm div}~ \bb{u}q {\rm d} x, \quad q \in \mathbb{P}_0(\mathcal{T}_h),\]
or equivalently,
\begin{equation}\label{commute}
\int_K {\rm div} ~\bb{v} {\rm d} x  = \int_K {\rm div} ~\bb{u}  {\rm d} x, \quad K \in \mathcal{T}_h.
\end{equation}

We construct the function $\bb{v}$ by determining its d.o.f. values in $\bb{V}_h^0$. For the vertex and the center of the disc (cf. Fig.~\ref{Fig1_Polygon_Refine}), denoted by $a$, we define
\[\bb{v}(a) = \bb{u}(a).\]
For the middle point $z$ of an edge $e$ in the coarse mesh, let $z_1$ and $z_2$ be the endpoints of $e$. We define
\[\bb{v}(z) = \frac{2}{|e|} \int_e \bb{u} {\rm d}s - \frac{\bb{u}(z_1) + \bb{u}(z_2)}{2}.\]
One easily finds that $\bb{v} \in \bb{V}_h^0$ satisfies
\[\int_e \bb{v} {\rm d}s = \int_e \bb{u} {\rm d}s, \quad e \subset \partial K, ~~ K \in \mathcal{T}_h,\]
which implies \eqref{commute} by using the integration by parts.

Let $I_h^0\bb{u}$ be the interpolation of $\bb{u}$ in $\bb{V}_h^0$. The triangle inequality gives
\[|\bb{v}-\bb{u}|_{1,K} \le |\bb{v}-I_h^0\bb{u}|_{1,K} + |\bb{u}-I_h^0\bb{u}|_{1,K}, \quad K \in \mathcal{T}_h.\]
By the standard interpolation error estimates,
\[h_K^{-1} \|\bb{u}-I_h^0\bb{u}\|_{0,K} + |\bb{u}-I_h^0\bb{u}|_{1,K} \lesssim |\bb{u}|_{1,K}.\]
Let $\bb{\chi}^c$ be the d.o.f vector on $K$. According to the inverse inequality and norm equivalence in \cite{Chen-Huang-2018}, and the definition of $\bb{v}$, we obtain
\[
|\bb{v}-I_h^0\bb{u}|_{1,K}
 \lesssim h_K^{-1} \|\bb{v}-I_h^0\bb{u}\|_{0,K} \eqsim \|\bb{\chi}^c (\bb{v}-I_h^0\bb{u})\|_{l^2} \\
 = \Big(\sum\limits_{i=1}^n (\bb{v}(z_i) - \bb{u}(z_i))^2\Big)^{1/2},
\]
where $z_i$ are the midpoints of the edges of $K$ and $n$ is the number of the edges. For fixed $z = z_i$, let $e$ be the corresponding edge with endpoints $z_1$ and $z_2$. Since $I_h^0\bb{u}$ is piecewise linear along $\partial K$, we have
\begin{align*}
\bb{v}(z) - \bb{u}(z)
& =  \frac{2}{|e|} \int_e \bb{u} {\rm d}s - \frac{\bb{u}(z_1) + \bb{u}(z_2)}{2} - \bb{u}(z) \\
& = \frac{2}{|e|} \int_e \bb{u} {\rm d}s - \frac{2}{|e|} \int_e I_h^0 \bb{u} {\rm d}s = \frac{2}{|e|} \int_e (\bb{u}-I_h^0\bb{u}) {\rm d}s,
\end{align*}
which along with the Cauchy-Schwarz inequality and the trace inequality (cf. \cite{BrennerScott2008,Chen-Huang-2018}) yields
\[
|\bb{v}(z) - \bb{u}(z)|
 \lesssim h_e^{-1/2} \|\bb{u}-I_h^0\bb{u}\|_{0,e} \lesssim h_K^{-1} \|\bb{u}-I_h^0\bb{u}\|_{0,K} + |\bb{u}-I_h^0\bb{u}|_{1,K}.
\]
The desired estimate follows by combining the previous inequalities.
\end{proof}

With the help of Lemma \ref{lem:vpexist}, We are in a position to prove the discrete Korn's inequality following a similar argument in \cite[Theorem 6.1]{Falk-1991}. Considering that this problem is in the context of VEMs, we still give the proof for the sake of completeness.

\begin{theorem} \label{thm:disKorn}
For all $\bb{v} \in \bb{V}_h$, there exists a constant $C$ independent of $\bb{v}$ such that
\[|\bb{v}|_{1,h} \le C \|\bb{\varepsilon}_h^*(\bb{v})\|_{0,h}.\]
\end{theorem}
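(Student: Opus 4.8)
The plan is to mimic the classical argument of Falk \cite[Theorem 6.1]{Falk-1991}, adapted to the virtual element setting, exploiting the splitting $\bb{\varepsilon}_h^*(\bb{v}) = \nabla_h \bb{v} - \frac{1}{2} \Pi_0 {\rm rot}_h~\bb{v}~\bb{\mathcal{X}}$ together with the identity \eqref{rotstar}. First I would fix $\bb{v}\in\bb{V}_h$ and write ${\rm rot}_h~\bb{v} = (\Pi_0{\rm rot}_h~\bb{v}) + ({\rm rot}_h~\bb{v} - \Pi_0{\rm rot}_h~\bb{v})$, so that the piecewise strain tensor decomposes as $\bb{\varepsilon}_h(\bb{v}) = \bb{\varepsilon}_h^*(\bb{v}) - \frac{1}{2}({\rm rot}_h~\bb{v} - \Pi_0{\rm rot}_h~\bb{v})\,\bb{\mathcal{X}}$. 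Since the classical (broken) Korn inequality holds for nonconforming VEM functions on the fine mesh $\mathcal{T}_h^*$ — here one invokes the known broken Korn inequality for $H^1$-nonconforming spaces whose edge moments match across interelement edges, which is valid because the single jump constraint $\int_e[\bb{v}_h]\,{\rm d}s = \bb{0}$ is exactly what is needed, see \cite{Brenner-2004,Zhang-Zhao-Yang-Chen-2019} — we get
\[
|\bb{v}|_{1,h} \lesssim \|\bb{\varepsilon}_h(\bb{v})\|_{0,h} \lesssim \|\bb{\varepsilon}_h^*(\bb{v})\|_{0,h} + \|{\rm rot}_h~\bb{v} - \Pi_0{\rm rot}_h~\bb{v}\|_{0,h}.
\]
So the whole problem reduces to controlling the consistency term $\|{\rm rot}_h~\bb{v} - \Pi_0{\rm rot}_h~\bb{v}\|_{0,\Omega}$ by $\|\bb{\varepsilon}_h^*(\bb{v})\|_{0,h}$.

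To bound that term, I would use a duality / inf-sup argument exactly as in Falk. Set $q := {\rm rot}_h~\bb{v} - \Pi_0{\rm rot}_h~\bb{v}$, which lies in $\bb{L}^2(\Omega)$ and is $L^2$-orthogonal to $\mathbb{P}_0(\mathcal{T}_h)$ by construction of $\Pi_0$; in particular $\int_\Omega q\,{\rm d}x = 0$. A technical point: because $\Pi_0$ projects onto piecewise constants on the \emph{coarse} mesh $\mathcal{T}_h$, $q$ is not itself piecewise constant, so one cannot directly apply Lemma \ref{lem:vpexist} to $q$. Instead I would introduce $\bar q := \Pi_0^{\mathcal{T}_h^*}({\rm rot}_h~\bb{v})$ (the fine-mesh piecewise-constant projection) or, more convenient, work with the coarse-mesh quantity: note that $\frac{1}{2}\Pi_0{\rm rot}_h~\bb{v}$ appears in $\bb{\varepsilon}_h^*$, and testing against a suitable $\bb{w}\in\bb{V}_h^0$ produced by Lemma \ref{lem:vpexist} lets us pair ${\rm div}~\bb{w}$ with any piecewise constant on $\mathcal{T}_h$. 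The cleanest route is: apply Lemma \ref{lem:vpexist} with $p$ equal to a piecewise-constant (coarse-mesh) function chosen so that $\|q\|_0^2$ is represented as $\int_\Omega q\,{\rm rot}_h~\bb{v}\,{\rm d}x = \int_\Omega q\,({\rm rot}_h~\bb{v} - \Pi_0{\rm rot}_h~\bb{v})\,{\rm d}x = \|q\|_0^2$, then integrate by parts the ${\rm rot}_h~\bb{v}$ against the conforming test function, obtaining on each coarse element $K$ a boundary term $\int_{\partial K}\bb{w}\cdot\bb{t}_K$-type pairing that, after summation over $\mathcal{T}_h^*$ and use of the matching moment condition on interior edges, is bounded by $|\bb{v}|_{1,h}\,|\bb{w}|_{1,h} \lesssim |\bb{v}|_{1,h}\|q\|_0$.

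More precisely, the mechanism is: $\int_\Omega q^2\,{\rm d}x = (q, {\rm rot}_h~\bb{v}) = \sum_{E}\int_E q\,{\rm rot}~\bb{v}\,{\rm d}x$; since $q\perp \mathbb{P}_0(\mathcal{T}_h)$ and $q$ is a finite linear combination we can instead bound $\|q\|_0 = \sup_{r} (q,r)/\|r\|_0$ over $r\in\mathbb{P}_0(\mathcal{T}_h^*)$ with $r\perp\mathbb{P}_0(\mathcal{T}_h)$ — actually the supremum over $r\in L^2$ with $\int_\Omega r = 0$ suffices after projecting. For such $r$ (with zero mean), Lemma \ref{lem:vpexist} with $p$ the coarse projection of $r$ is not quite enough, so the sharper statement I would actually need — and I would either cite it from the fine-mesh inf-sup theory or prove it by the same construction as Lemma \ref{lem:vpexist} — is that for every $r\in L^2(\Omega)$ with $\int_\Omega r = 0$ there is $\bb{w}\in\bb{V}_h^0$ with $\int_E {\rm div}~\bb{w}\,{\rm d}x = \int_E r\,{\rm d}x$ for all $E\in\mathcal{T}_h^*$ and $\|\bb{w}\|_1 \lesssim \|r\|_0$. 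Granting that, $(q, r) = \sum_E \int_E q\,r$, replace $r$ by its fine-mesh average so $(q,r) = \sum_E\int_E {\rm div}~\bb{w}~\overline{{\rm rot}_h\bb{v}}^E = \sum_E\int_E{\rm div}~\bb{w}\,{\rm rot}_h~\bb{v}$ (using $q\perp\mathbb{P}_0(\mathcal{T}_h)$ to drop $\Pi_0{\rm rot}_h\bb{v}$ against the coarse part), integrate by parts on each $E$ to transfer derivatives onto $\bb{w}$, sum up using that the jumps of $\bb{v}$ and the continuity of $\bb{w}$ make interelement boundary terms telescope or vanish, and conclude $(q,r)\lesssim |\bb{v}|_{1,h}\,\|\bb{w}\|_1 \lesssim |\bb{v}|_{1,h}\,\|r\|_0$. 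Taking the supremum over $r$ gives $\|q\|_0 \lesssim |\bb{v}|_{1,h}$, and then combining with the broken Korn bound and absorbing $\|q\|_0$ — since $|\bb{v}|_{1,h}\lesssim\|\bb{\varepsilon}_h^*(\bb{v})\|_{0,h} + \|q\|_0$ and a fortiori $\|q\|_0 \lesssim |\bb{v}|_{1,h}$ needs a constant $<1$ after the Korn step; the standard fix is to run the estimate as $\|\bb{\varepsilon}_h(\bb v)\|_{0,h}^2 = \|\bb{\varepsilon}_h^*(\bb v)\|_{0,h}^2 + \tfrac14\|q\|_0^2$ exactly (by \eqref{rotstar} applied with the identity $\bb\varepsilon_h = \bb\varepsilon_h^* - \tfrac12 q\bb{\mathcal X}$ and orthogonality), so no absorption constant issue arises.

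\textbf{Main obstacle.} The delicate point is the mismatch between the coarse mesh $\mathcal{T}_h$ (on which $\Pi_0{\rm rot}_h$ lives) and the fine mesh $\mathcal{T}_h^*$ (on which $\bb{V}_h$ and the broken Korn inequality live): I must control a fine-mesh oscillation $\|{\rm rot}_h\bb{v} - \Pi_0{\rm rot}_h\bb{v}\|_0$, and Lemma \ref{lem:vpexist} only supplies a right inverse of ${\rm div}$ against \emph{coarse} piecewise constants, whereas the natural test functions here are fine-mesh quantities. Bridging this — either by strengthening Lemma \ref{lem:vpexist} to the fine mesh (which the same vertex/midpoint construction should give, since each $E$ has uniformly bounded geometry by \textbf{C0}) or by a two-level argument that first handles the coarse part and then the remaining fine oscillation via a local Poincaré/inf-sup estimate on each $K$ with its sub-elements — is where the real work lies; everything else is the standard Falk duality argument together with the broken Korn inequality for moment-continuous nonconforming spaces and the norm equivalences of \cite{Chen-Huang-2018,Chen-HuangX-2020,Huang-2020} already invoked in Theorem \ref{thm:normeq}.
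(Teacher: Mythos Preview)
Your proposal rests on a step that does not hold. You invoke a broken Korn inequality
\[
|\bb{v}|_{1,h} \lesssim \|\bb{\varepsilon}_h(\bb{v})\|_{0,h}
\]
for functions in $\bb{V}_h$ on the fine mesh, claiming that the moment-continuity condition $\int_e[\bb{v}_h]\,{\rm d}s = \bb{0}$ is ``exactly what is needed''. It is not: for the lowest-order nonconforming space (the Crouzeix--Raviart element, and its VEM analogue $\bb{V}_1(E)$) this inequality is \emph{false}. This is precisely the dimension-counting obstruction proved by Falk and recalled at the start of Section~3.2, and it is the very reason $\bb{\varepsilon}_h^*$ is introduced. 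The reference \cite{Zhang-Zhao-Yang-Chen-2019} you cite says so explicitly (their method is stable only for $k\ge 2$). Brenner's broken Korn inequality carries jump terms that do \emph{not} vanish under mere zeroth-moment continuity; controlling them would require the edge-jump to be orthogonal to the full space of rigid motions on each edge, which needs a higher-order constraint you do not have. Note too that if broken Korn did hold, your argument would be superfluous: the identity you compute at the end, $\|\bb{\varepsilon}_h(\bb{v})\|_0^2 = \|\bb{\varepsilon}_h^*(\bb{v})\|_0^2 - \tfrac12\|q\|_0^2$, already gives $\|\bb{\varepsilon}_h(\bb{v})\|_0 \le \|\bb{\varepsilon}_h^*(\bb{v})\|_0$ and the theorem would follow in one line, with no need to estimate $\|q\|_0$ at all.

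The paper's proof avoids any appeal to broken Korn by working directly with $\bb{\varepsilon}_h^*$ via a duality argument. One chooses a test tensor $\bb{\tau} = \nabla_h\bb{v} - \mathrm{curl}\,\bb{z}$, where $\bb{z}\in\bb{V}_h^0$ is supplied by Lemma~\ref{lem:vpexist} applied to the coarse-mesh constant $p=\Pi_0\mathrm{rot}_h\bb{v}$ (which has zero mean by the constraint in $\bb{V}_h$). Two facts make this work: (i) $\int_\Omega \nabla_h\bb{v}:\mathrm{curl}\,\bb{z}\,{\rm d}x = 0$, because after integration by parts the edge contributions involve the tangential derivative of $\bb{z}$, which is piecewise \emph{constant} (since $\bb{z}\in\bb{V}_h^0$ is piecewise linear on $\partial E$), and the moment continuity of $\bb{v}$ then annihilates them; and (ii) $\int_\Omega \Pi_0\mathrm{rot}_h\bb{v}\,(\mathrm{rot}_h\bb{v}-\mathrm{div}\,\bb{z})\,{\rm d}x=0$ by the defining property of $\bb{z}$. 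These give $(\bb{\varepsilon}_h^*(\bb{v}),\bb{\tau})=|\bb{v}|_{1,h}^2$ exactly, while $\|\bb{\tau}\|_0\lesssim|\bb{v}|_{1,h}$. So Lemma~\ref{lem:vpexist} in its stated coarse-mesh form is already sufficient; there is no fine-mesh/coarse-mesh mismatch to bridge, and no absorption constant to worry about.
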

\begin{proof}
By definition, we have for all $\bb{\tau}\in \bb{L}^2(\Omega)$,
\[\int_\Omega \bb{\varepsilon}_h^*(\bb{v}) : \bb{\tau} {\rm d} x
= \int_\Omega \Big(\nabla_h \bb{v} - \frac{1}{2} \Pi_0 {\rm rot}_h \bb{v}~\bb{\mathcal{X}}\Big) : \bb{\tau} {\rm d} x.\]
Using Lemma \ref{lem:vpexist}, we may choose $\bb{\tau} = \nabla_h \bb{v} - {\rm curl}~ \bb{z}$, where
\[{\rm curl}~ \bb{z} = \begin{bmatrix}
\partial_2 z_1  &  - \partial_1 z_1 \\  \partial_2 z_2  &  -\partial_1 z_2
\end{bmatrix}, \quad \bb{z} = (z_1,z_2)^\intercal\]
and $\bb{z}\in \bb{V}_h^0$ satisfies
\begin{equation}\label{rotzexs}
\int_\Omega {\rm div} \bb{z} q {\rm d} x = \int_\Omega {\rm rot}_h \bb{v} q {\rm d} x, \quad q \in \mathbb{P}_0(\mathcal{T}_h);
\qquad \|\bb{z}\|_1 \le C \|{\rm rot}_h \bb{v}\|_0.
\end{equation}
Then
\begin{equation}\label{tauL2}
\|\bb{\tau}\|_0  \le \|\nabla_h \bb{v}\|_0 + \|{\rm curl}~ \bb{z}\|_0 \le C(\|\nabla_h \bb{v}\|_0 + \|{\rm rot}_h \bb{v}\|_0)
\le C \|\nabla_h \bb{v}\|_0 = C|\bb{v}|_{1,h}.
\end{equation}
Using the integration by parts and noting that ${\rm div} ({\rm curl}~\bb{z}) = \bb{0}$,  one has
\[\int_\Omega \nabla_h \bb{v} : {\rm curl}~ \bb{z} {\rm d} x
= \sum\limits_{E\in \mathcal{T}_h^*} \int_{\partial E} \bb{v} \cdot \frac{\partial \bb{z}}{\partial s}{\rm d}s
+ \sum\limits_{E\in \mathcal{T}_h^*} \int_E  \bb{v} \cdot {\rm div} ({\rm curl}~\bb{z}) {\rm d}x
= \sum\limits_{E\in \mathcal{T}_h^*} \int_{\partial E} \bb{v} \cdot \frac{\partial \bb{z}}{\partial s}{\rm d}s  = 0,\]
where the summation vanishes since on the boundary edges $\bb{z} = 0$, while on interior edges, contributions from adjoining elements cancel by the continuity of the d.o.f.s. Here we have used the fact that the tangential derivatives of $\bb{z}$ are polynomials of degree 0 by the definition of $\bb{V}_h^0$. According to the $L^2$ orthogonality of $\nabla_h \bb{v}$  and ${\rm curl}~\bb{z}$ and \eqref{rotzexs}, we obtain
\begin{align*}
\int_\Omega \bb{\varepsilon}_h^*(\bb{v}) : \bb{\tau} {\rm d} x
& = \int_\Omega \Big(\nabla_h \bb{v} - \frac{1}{2} \Pi_0 {\rm rot}_h \bb{v}~\bb{\mathcal{X}}\Big) : (\nabla_h \bb{v} - {\rm curl}~ \bb{z}) {\rm d} x \\
& = \int_\Omega \nabla_h \bb{v} : \nabla_h \bb{v}{\rm d} x
  - \frac{1}{2}  \int_\Omega  \Pi_0 {\rm rot}_h \bb{v}~\bb{\mathcal{X}}  : (\nabla_h \bb{v} - {\rm curl}~ \bb{z})  {\rm d} x \\
& = \int_\Omega \nabla_h \bb{v} : \nabla_h \bb{v}{\rm d} x
  - \frac{1}{2}  \int_\Omega  \Pi_0 {\rm rot}_h \bb{v} ({\rm rot}_h \bb{v} - {\rm div}~ \bb{z})  {\rm d} x \\
& = \int_\Omega \nabla_h \bb{v} : \nabla_h \bb{v}{\rm d} x
  - \frac{1}{2}  \int_\Omega  \Pi_0 {\rm rot}_h \bb{v} ({\rm rot}_h \bb{v} - {\rm rot}_h \bb{v})  {\rm d} x
  = \|\nabla_h \bb{v}\|_0^2 = |\bb{v}|_{1,h}^2,
\end{align*}
which together with the estimate of \eqref{tauL2} yields
\[
\|\bb{\varepsilon}_h^*(\bb{v})\|_0
\ge \frac{\int_\Omega \bb{\varepsilon}_h^*(\bb{v}) : \bb{\tau} {\rm d} x}{\|\bb{\tau}\|_0}
\ge C |\bb{v}|_{1,h},
\]
as required.
\end{proof}

We now have the following coercivity result for the discrete bilinear form.
\begin{theorem}\label{thm:coercivity}
For all $\bb{v}\in \bb{V}_h$, there holds
\[|\bb{v}|_{1,h}^2 \lesssim a_h(\bb{v}, \bb{v}).\]
\end{theorem}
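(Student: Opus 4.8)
The plan is to read off coercivity as an immediate consequence of the two results already in place: the local norm equivalence of Theorem~\ref{thm:normeq} and the discrete Korn inequality of Theorem~\ref{thm:disKorn}. The only genuinely new observation needed is that the $\lambda$-contribution can simply be thrown away.

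First I would note that $a_{\lambda,h}(\bb{v},\bb{v}) = \sum_{E\in\mathcal{T}_h^*} \|\Pi_0^E{\rm div}~\bb{v}\|_{0,E}^2 \ge 0$ for every $\bb{v}\in\bb{V}_h$, since $\Pi_0^E$ is an $L^2$-projection. Hence, from the definition \eqref{disbilinear},
\[
a_h(\bb{v},\bb{v}) \;\ge\; 2\mu\, a_{\mu,h}(\bb{v},\bb{v}), \qquad \bb{v}\in\bb{V}_h.
\]
This is exactly where the locking-free structure is exploited: no coercivity bound involving $\lambda$ is required, so the argument is automatically uniform in $\lambda$.

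Next I would sum the local equivalence $a_{\mu,h}^K(\bb{v},\bb{v}) \eqsim \|\bb{\varepsilon}_h^*(\bb{v})\|_{0,K}^2$ of Theorem~\ref{thm:normeq} over all $K\in\mathcal{T}_h$ to obtain $a_{\mu,h}(\bb{v},\bb{v}) \eqsim \|\bb{\varepsilon}_h^*(\bb{v})\|_{0,h}^2$, and then invoke Theorem~\ref{thm:disKorn} in the form $|\bb{v}|_{1,h}^2 \le C\|\bb{\varepsilon}_h^*(\bb{v})\|_{0,h}^2$. Chaining these estimates gives
\[
|\bb{v}|_{1,h}^2 \;\lesssim\; \|\bb{\varepsilon}_h^*(\bb{v})\|_{0,h}^2 \;\lesssim\; a_{\mu,h}(\bb{v},\bb{v}) \;\le\; \frac{1}{2\mu}\, a_h(\bb{v},\bb{v}),
\]
which is the asserted inequality.

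I do not expect any real obstacle here, since all the substantive work has been front-loaded into Theorems~\ref{thm:normeq} and~\ref{thm:disKorn}. The single point worth stating explicitly is that the hidden constant is permitted to depend on $\mu$ (the argument needs $\mu$ bounded away from zero, which holds throughout the physical range of the Lam\'e parameters) but not on $\lambda$ or on the mesh size $h$; this is precisely the sense in which the scheme is locking-free.
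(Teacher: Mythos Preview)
Your proposal is correct and follows essentially the same approach as the paper, which simply states that the result is a direct consequence of Theorems~\ref{thm:normeq} and~\ref{thm:disKorn} together with the definition~\eqref{disbilinear}. You have merely spelled out explicitly the observation that the $\lambda$-term is nonnegative and can be dropped, which is implicit in the paper's one-line proof.
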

\begin{proof}
This is a direct consequence of Theorems \ref{thm:normeq} and \ref{thm:disKorn} according to the definition of the discrete bilinear form, see Eq. \eqref{disbilinear}.
\end{proof}

\subsection{Error analysis of the VEM}

Following the similar arguments in \cite{Beirao-Brezzi-Marini-2013,Zhang-Zhao-Yang-Chen-2019}, we can derive an abstract lemma for error analysis described as follows.
\begin{lemma}	\label{Thm1_Elasticity_TensorEq_VEM_Conver_Est}
Let $\bb{u}_h\in\bb{V}_h$ be the solution of the discrete problem \eqref{VEM} and $\bb{u}$ the weak solution of problem \eqref{model}. Then under the mesh assumption \textbf{C0}, for any piecewise polynomial $\bb{u}_{\pi}\in (\mathbb{P}_k(\mathcal{T}_h^*))^2$, there holds
	\begin{equation}\label{Strang}
	| \bb{u} - \bb{u}_h |_{1,h}
	\lesssim  | \bb{u} - \bb{u}_I |_{1,h}
	+ | \bb{u} - \bb{u}_{\pi} |_{1,h}
	+ \lambda \| {\rm div}\bb{u} - \Pi_0{\rm div} \bb{u} \|_0
	+ \| \bb{f} - \bb{f}_h \|_{\bb{V}_h^{'}} + E_h,
	\end{equation}
	where $\bb{u}_I$ is the interpolation of $\bb{u}$ in $\bb{V}_h$, and
\[\| \bb{f} - \bb{f}_h \|_{\bb{V}_h^{'}}
	= \sup_{\bb{v}\in\bb{V}_h}
	\frac{|(\bb{f},\,\bb{v}_h) - \langle\bb{f}_h,\,\bb{v}_h\rangle|}{|\bb{v}_h|_{1,h}},
\quad E_h = \mathop {\sup}\limits_{\boldsymbol{v}_h \in \boldsymbol{V}_h} \frac{|a(\boldsymbol{u},\boldsymbol{v}_h) - (\boldsymbol{f},\boldsymbol{v}_h)|}{|\bb{v}_h|_{1,h}}.
\]
\end{lemma}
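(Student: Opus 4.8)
The plan is to run the second Strang lemma for nonconforming methods, with two ingredients special to this construction: the coercivity of $a_h(\cdot,\cdot)$ from Theorem~\ref{thm:coercivity}, and the observation that the reduced-integration modification is ``invisible'' to the interpolation $\bb{u}_I$, because the operator $\Pi_0^K$ only acts on quantities that are already fixed by the edge degrees of freedom. First I would set $\bb{v}_h:=\bb{u}_h-\bb{u}_I\in\bb{V}_h$; by Theorem~\ref{thm:coercivity} and \eqref{VEM},
\[
|\bb{u}_h-\bb{u}_I|_{1,h}^2\lesssim a_h(\bb{u}_h-\bb{u}_I,\bb{v}_h)=\langle\bb{f}_h,\bb{v}_h\rangle-a_h(\bb{u}_I,\bb{v}_h).
\]
Introducing $(\bb{f},\bb{v}_h)$ and the broken bilinear form $a(\bb{u},\bb{v}_h):=2\mu(\bb{\varepsilon}(\bb{u}),\bb{\varepsilon}_h(\bb{v}_h))+\lambda(\mathrm{div}\,\bb{u},\mathrm{div}_h\bb{v}_h)$ (which is what enters $E_h$), the right-hand side becomes $\big(\langle\bb{f}_h,\bb{v}_h\rangle-(\bb{f},\bb{v}_h)\big)+\big((\bb{f},\bb{v}_h)-a(\bb{u},\bb{v}_h)\big)+\big(a(\bb{u},\bb{v}_h)-a_h(\bb{u}_I,\bb{v}_h)\big)$, whose first two summands are $\le\|\bb{f}-\bb{f}_h\|_{\bb{V}_h'}|\bb{v}_h|_{1,h}$ and $\le E_h|\bb{v}_h|_{1,h}$ by definition. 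Dividing by $|\bb{v}_h|_{1,h}=|\bb{u}_h-\bb{u}_I|_{1,h}$ and then using $|\bb{u}-\bb{u}_h|_{1,h}\le|\bb{u}-\bb{u}_I|_{1,h}+|\bb{u}_h-\bb{u}_I|_{1,h}$ reduces everything to bounding the third summand.

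For the $\lambda$-part of $a(\bb{u},\bb{v}_h)-a_h(\bb{u}_I,\bb{v}_h)$ I would use that the interpolation preserves edge averages, hence $\int_{\partial E}\bb{u}_I\cdot\bb{n}\,\mathrm{d}s=\int_{\partial E}\bb{u}\cdot\bb{n}\,\mathrm{d}s$ and therefore $\Pi_0^E\mathrm{div}\,\bb{u}_I=\tfrac{1}{|E|}\int_E\mathrm{div}\,\bb{u}_I\,\mathrm{d}x=\Pi_0^E\mathrm{div}\,\bb{u}$. Combining this with the elementary identity $(g,h)_E-(\Pi_0^Eg,\Pi_0^Eh)_E=(g-\Pi_0^Eg,h-\Pi_0^Eh)_E$, the $\lambda$-part collapses to $\lambda\sum_E(\mathrm{div}\,\bb{u}-\Pi_0^E\mathrm{div}\,\bb{u},\mathrm{div}\,\bb{v}_h-\Pi_0^E\mathrm{div}\,\bb{v}_h)_E\le\lambda\|\mathrm{div}\,\bb{u}-\Pi_0\mathrm{div}\,\bb{u}\|_0\,|\bb{v}_h|_{1,h}$, which is exactly the term $\lambda\|\mathrm{div}\,\bb{u}-\Pi_0\mathrm{div}\,\bb{u}\|_0$ of \eqref{Strang}; this is the only place where a factor $\lambda$ survives.

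For the $\mu$-part I would split $a_{\mu,h}^K$ into its elliptic-projection/stabilization part and the piece $-\tfrac12(\Pi_0^K\mathrm{rot}\,\cdot,\Pi_0^K\mathrm{rot}\,\cdot)_K$, and correspondingly write $(\bb{\varepsilon}(\bb{u}),\bb{\varepsilon}_h(\bb{v}_h))=(\nabla\bb{u},\nabla_h\bb{v}_h)-\tfrac12(\mathrm{rot}\,\bb{u},\mathrm{rot}_h\bb{v}_h)$. For the $\mathrm{rot}$-pieces, the edge degrees of freedom give $\int_K\mathrm{rot}_h\bb{u}_I\,\mathrm{d}x=\int_{\partial K}\bb{u}_I\cdot\bb{t}_K\,\mathrm{d}s=\int_K\mathrm{rot}\,\bb{u}\,\mathrm{d}x$, hence $\Pi_0^K\mathrm{rot}\,\bb{u}_I=\Pi_0^K\mathrm{rot}\,\bb{u}$; applying the same projection identity, their net contribution equals $-\tfrac12\sum_K(\mathrm{rot}\,\bb{u}-\Pi_0^K\mathrm{rot}\,\bb{u},\mathrm{rot}_h\bb{v}_h-\Pi_0^K\mathrm{rot}_h\bb{v}_h)_K$, which is bounded by $\|\mathrm{rot}\,\bb{u}-\Pi_0\mathrm{rot}\,\bb{u}\|_0|\bb{v}_h|_{1,h}$, a residual that is $\mathcal O(h|\bb{u}|_2)$ and hence of the same optimal order as $|\bb{u}-\bb{u}_\pi|_{1,h}$. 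For the elliptic-projection/stabilization pieces I would insert the piecewise-linear $\bb{u}_\pi$: since $\Pi_1^E\bb{u}_\pi=\bb{u}_\pi$ the stabilization drops and this part evaluated at $\bb{u}_\pi$ reduces to $(\nabla_h\bb{u}_\pi,\nabla_h\bb{v}_h)$, so the contribution becomes $(\nabla(\bb{u}-\bb{u}_\pi),\nabla_h\bb{v}_h)$ plus the same functional evaluated at $\bb{u}_\pi-\bb{u}_I$, which the VEM norm equivalence quoted in the proof of Theorem~\ref{thm:normeq} bounds by $|\bb{u}_\pi-\bb{u}_I|_{1,h}|\bb{v}_h|_{1,h}\le(|\bb{u}-\bb{u}_\pi|_{1,h}+|\bb{u}-\bb{u}_I|_{1,h})|\bb{v}_h|_{1,h}$; collecting these estimates finishes the proof.

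The point I expect to require care is this interplay between coercivity and consistency: the substitution of $\mathrm{rot}_h$ by $\Pi_0\mathrm{rot}_h$ (equivalently, the identity \eqref{rotstar} behind Theorems~\ref{thm:normeq} and~\ref{thm:disKorn}) is exactly what buys the coercivity of $a_h$, but it destroys the naive polynomial consistency of $a_{\mu,h}$. What rescues consistency without reintroducing a factor $\lambda$ is that $\Pi_0^K$, for both $\mathrm{div}$ and $\mathrm{rot}$, acts only on quantities determined by $\int_e\bb{v}\,\mathrm{d}s$, so the two commuting relations $\Pi_0^E\mathrm{div}\,\bb{u}_I=\Pi_0^E\mathrm{div}\,\bb{u}$ and $\Pi_0^K\mathrm{rot}\,\bb{u}_I=\Pi_0^K\mathrm{rot}\,\bb{u}$ hold and confine the $\lambda$-contribution and the reduced-integration error to terms of optimal approximation order. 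The remainder is the standard bookkeeping of the second Strang lemma.
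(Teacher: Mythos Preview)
Your approach is exactly the route the paper indicates: set $\bb{\delta}_h=\bb{u}_h-\bb{u}_I$, invoke the coercivity of Theorem~\ref{thm:coercivity}, and run the standard second-Strang decomposition (the paper's own proof stops after the coercivity step and refers the remainder to \cite{Zhang-Zhao-Yang-Chen-2019}); your use of the commuting relations $\Pi_0^E\mathrm{div}\,\bb{u}_I=\Pi_0^E\mathrm{div}\,\bb{u}$ and $\Pi_0^K\mathrm{rot}\,\bb{u}_I=\Pi_0^K\mathrm{rot}\,\bb{u}$ is precisely what that standard argument needs here. Your observation that the reduced-integration modification leaves a residual $\|\mathrm{rot}\,\bb{u}-\Pi_0\mathrm{rot}\,\bb{u}\|_0$ not literally present in \eqref{Strang} is a fair point about the precision of the stated bound---it is harmless for Theorem~\ref{thm:err}, and would be absorbed into $|\bb{u}-\bb{u}_\pi|_{1,h}$ if $\bb{u}_\pi$ were taken piecewise polynomial on the coarse mesh $\mathcal{T}_h$ rather than on $\mathcal{T}_h^*$.
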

\begin{proof}
By setting $\bb{\delta}_h = \bb{u}_h-\bb{u}_I$ and using the triangle's inequality, it suffices to bound $|\bb{\delta}_h|_{1,h}$. According to Theorem \ref{thm:coercivity}, we have the coercivity
\[|\bb{\delta}_h|_{1,h}^2 \lesssim a_h(\bb{\delta}_h, \bb{\delta}_h).\]
The rest of the argument is standard, so we omit it for simplicity. One can refer to Theorem 6.1 in \cite{Zhang-Zhao-Yang-Chen-2019} for details.
\end{proof}

With the help of the abstract lemma, we are able to derive the following estimate.

\begin{theorem}\label{thm:err}
For all $\bb{f}\in \bb{L}^2(\Omega)$, let $\bb{u}_h\in\bb{V}_h$ be the solution of the discrete problem \eqref{VEM} and $\bb{u}$ the weak solution of problem \eqref{model}. There holds
\[|\bb{u}-\bb{u}_h|_{1,h} \lesssim h \|\bb{f}\|_0.\]
\end{theorem}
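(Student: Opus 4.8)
The plan is to invoke the abstract estimate of Lemma~\ref{Thm1_Elasticity_TensorEq_VEM_Conver_Est} and to bound each of the five terms on the right-hand side of \eqref{Strang} by $C h \|\bb{f}\|_0$. Throughout, the workhorse is the elliptic regularity estimate \eqref{regularity}, i.e. $\|\bb{u}\|_2 + \lambda\|{\rm div}~\bb{u}\|_1 \lesssim \|\bb{f}\|_0$, valid because $\Omega$ is convex; this is precisely what makes every bound uniform in $\lambda$. I would choose $\bb{u}_\pi \in (\mathbb{P}_1(\mathcal{T}_h^*))^2$ to be the piecewise degree-one $L^2$-projection (or piecewise averaged Taylor polynomial) of $\bb{u}$ on the fine mesh $\mathcal{T}_h^*$.

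The first two terms are standard. Since the mesh assumption \textbf{C0} guarantees that $\mathcal{T}_h^*$ is uniformly shape-regular, the VEM interpolation error estimate on $\bb{V}_h$ gives $|\bb{u}-\bb{u}_I|_{1,h}\lesssim h|\bb{u}|_2$, and the Bramble--Hilbert lemma gives $|\bb{u}-\bb{u}_\pi|_{1,h}\lesssim h|\bb{u}|_2$; both are $\lesssim h\|\bb{f}\|_0$ by \eqref{regularity}. For the locking term, the elementwise $L^2$-projection error estimate yields $\|{\rm div}~\bb{u} - \Pi_0{\rm div}~\bb{u}\|_0 \lesssim h\,|{\rm div}~\bb{u}|_1$, so $\lambda\|{\rm div}~\bb{u} - \Pi_0{\rm div}~\bb{u}\|_0 \lesssim h\,\lambda|{\rm div}~\bb{u}|_1 \lesssim h\|\bb{f}\|_0$, again by \eqref{regularity} --- this is exactly the step that would otherwise blow up as $\lambda\to\infty$.

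For the load term, note that $P_h\bb{v}_h|_E$ is the mean of $\bb{v}_h$ over $\partial E$, so $\bb{v}_h-P_h\bb{v}_h$ has vanishing mean on $\partial E$; a Poincar\'e/trace inequality on the shape-regular element $E$ then gives $\|\bb{v}_h-P_h\bb{v}_h\|_{0,E}\lesssim h_E|\bb{v}_h|_{1,E}$. Hence $|(\bb{f},\bb{v}_h)-\langle\bb{f}_h,\bb{v}_h\rangle| = |(\bb{f},\bb{v}_h-P_h\bb{v}_h)| \le \|\bb{f}\|_0\,\|\bb{v}_h-P_h\bb{v}_h\|_{0,h}\lesssim h\|\bb{f}\|_0\,|\bb{v}_h|_{1,h}$, that is, $\|\bb{f}-\bb{f}_h\|_{\bb{V}_h'}\lesssim h\|\bb{f}\|_0$.

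The remaining and most delicate term is the consistency error $E_h$, and this is where I expect the main obstacle. I would integrate by parts element-by-element over $\mathcal{T}_h^*$ in $a(\bb{u},\bb{v}_h)=\sum_{E}a^E(\bb{u},\bb{v}_h)$ and use $-{\rm div}~\bb{\sigma}(\bb{u})=\bb{f}$ to cancel the volume contribution, leaving $a(\bb{u},\bb{v}_h)-(\bb{f},\bb{v}_h)=\sum_{e}\int_e (\bb{\sigma}(\bb{u})\bb{n}_e)\cdot[\bb{v}_h]\,{\rm d}s$, where on boundary edges $[\bb{v}_h]=\bb{v}_h$ and $\bb{\sigma}(\bb{u})\bb{n}=\bb{0}$ by the traction condition, while on interior edges $\bb{\sigma}(\bb{u})\bb{n}_e$ is single-valued since $\bb{\sigma}(\bb{u})\in\bb{H}^1(\Omega)$. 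Because $\int_e[\bb{v}_h]\,{\rm d}s=\bb{0}$ holds for every edge of $\mathcal{T}_h^*$ (a defining property of $\bb{V}_h$), I may subtract the edge-average of $\bb{\sigma}(\bb{u})\bb{n}_e$ and estimate each edge term by the Cauchy--Schwarz inequality together with the trace and polynomial-approximation inequalities on the shape-regular elements, which gives $E_h \lesssim h\,|\bb{\sigma}(\bb{u})|_1\,|\bb{v}_h|_{1,h}$. Finally $|\bb{\sigma}(\bb{u})|_1 \le 2\mu|\bb{\varepsilon}(\bb{u})|_1 + \lambda|{\rm div}~\bb{u}|_1 \lesssim \|\bb{u}\|_2 + \lambda\|{\rm div}~\bb{u}\|_1 \lesssim \|\bb{f}\|_0$ by \eqref{regularity}, so $E_h\lesssim h\|\bb{f}\|_0$. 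The point of difficulty here is that the nonconformity estimate must remain $\lambda$-uniform, which works only because \eqref{regularity} controls the \emph{entire} stress $\bb{\sigma}(\bb{u})$ and not merely $\bb{u}$. Collecting the five bounds in \eqref{Strang} yields $|\bb{u}-\bb{u}_h|_{1,h}\lesssim h\|\bb{f}\|_0$, as claimed.
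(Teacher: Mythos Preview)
Your proposal is correct and follows the same route as the paper: invoke the Strang-type Lemma~\ref{Thm1_Elasticity_TensorEq_VEM_Conver_Est}, bound the interpolation and projection terms by standard approximation, control the $\lambda$-weighted divergence term via the regularity estimate \eqref{regularity}, and handle the load and nonconformity terms by the usual Poincar\'e/trace and jump arguments. The only difference is presentational: the paper simply cites \cite{Zhang-Zhao-Yang-Chen-2019} for the last two terms, whereas you have (correctly) written out the details, including the key observation that the boundary edge contributions in $E_h$ vanish thanks to $\bb{\sigma}(\bb{u})\bb{n}=\bb{0}$ so that the zero-mean jump condition is only needed on interior edges.
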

\begin{proof}
According to the interpolation and the projection error estimates, one easily obtains
\[| \bb{u} - \bb{u}_I |_{1,h} + | \bb{u} - \bb{u}_{\pi} |_{1,h}  \lesssim h \|\bb{f}\|_0,\]
where $\bb{u}_{\pi}$ can be chosen as the piecewise $L^2$ or elliptic projection. By the regularity estimate \eqref{regularity},
\[\lambda \| {\rm div}\bb{u} - \Pi_0{\rm div} \bb{u} \|_0 \lesssim  h \lambda\|{\rm div} \bb{u}\|_0 \lesssim h \|\bb{f}\|_0.\]
The estimates of the last two terms in \eqref{Strang} can be found in \cite{Zhang-Zhao-Yang-Chen-2019}. This completes the proof.
\end{proof}

\begin{remark}
  Using the standard duality argument, we may conclude that
 \[\|\bb{u}-\bb{u}_h\|_0 \lesssim h^2 \|\bb{f}\|_0\]
 for the convex polygonal domain $\Omega$.
\end{remark}

\section{Some discussions} \label{sec:discussion}

\subsection{More general mesh assumption}

It is obvious that the lowest-order nonconforming virtual element on triangular meshes is exactly the Crouzeix-Raviart finite element. In this case, if the mesh refinement in Fig.~\ref{Fig1_Polygon_Refine} or Fig.~\ref{fig:refineType}~(a) is replaced by the second one in Fig.~\ref{fig:refineType}, then our method is reduced to the one given by Falk in \cite{Falk-1991}.

\begin{figure}[!htb]
  \centering
  \subfigure[refineType=1]{\includegraphics[scale=0.35]{images/refineType1-eps-converted-to.pdf}}
  \subfigure[refineType=2]{\includegraphics[scale=0.35]{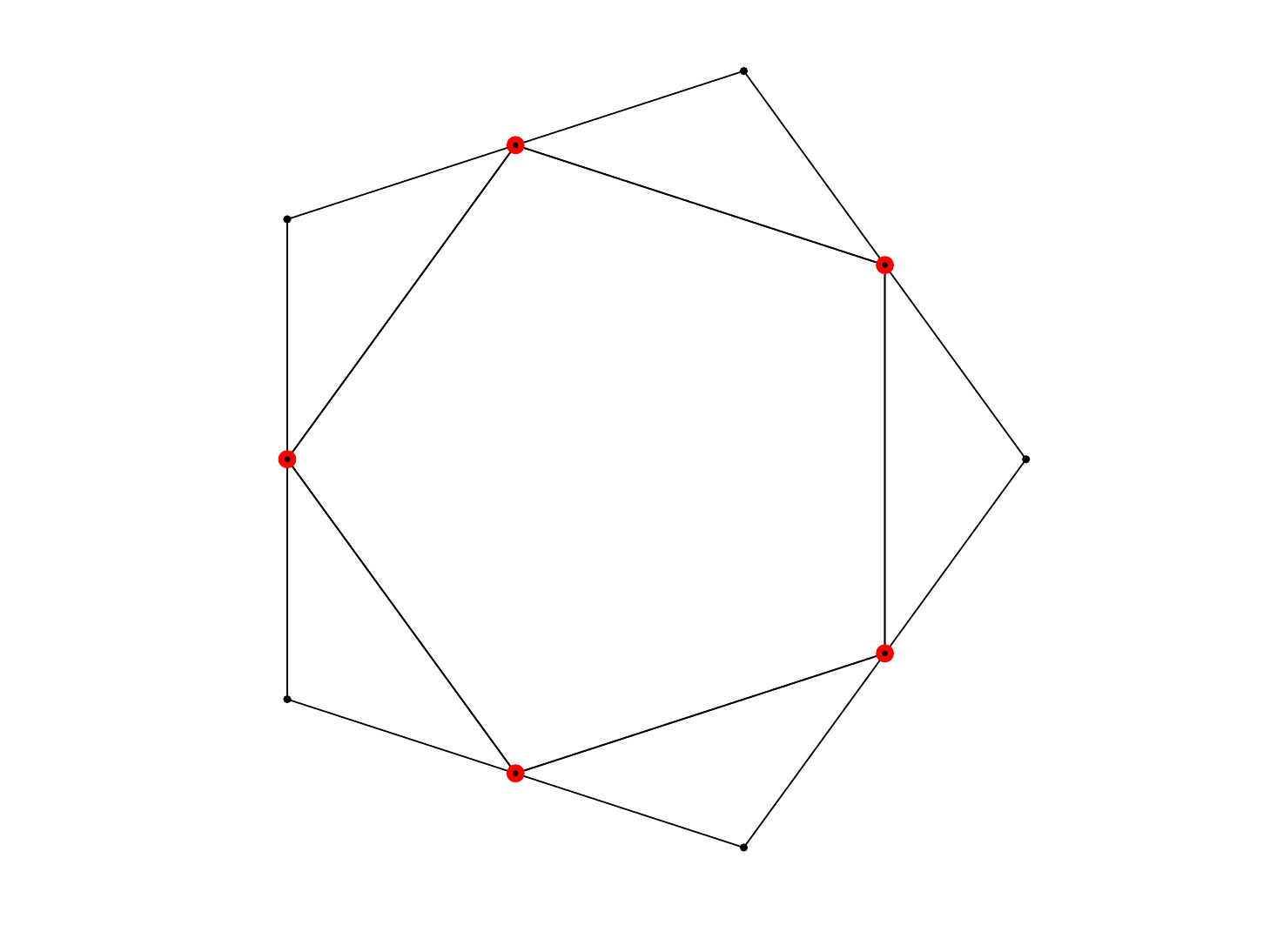}}
  \subfigure[refineType=3]{\includegraphics[scale=0.35]{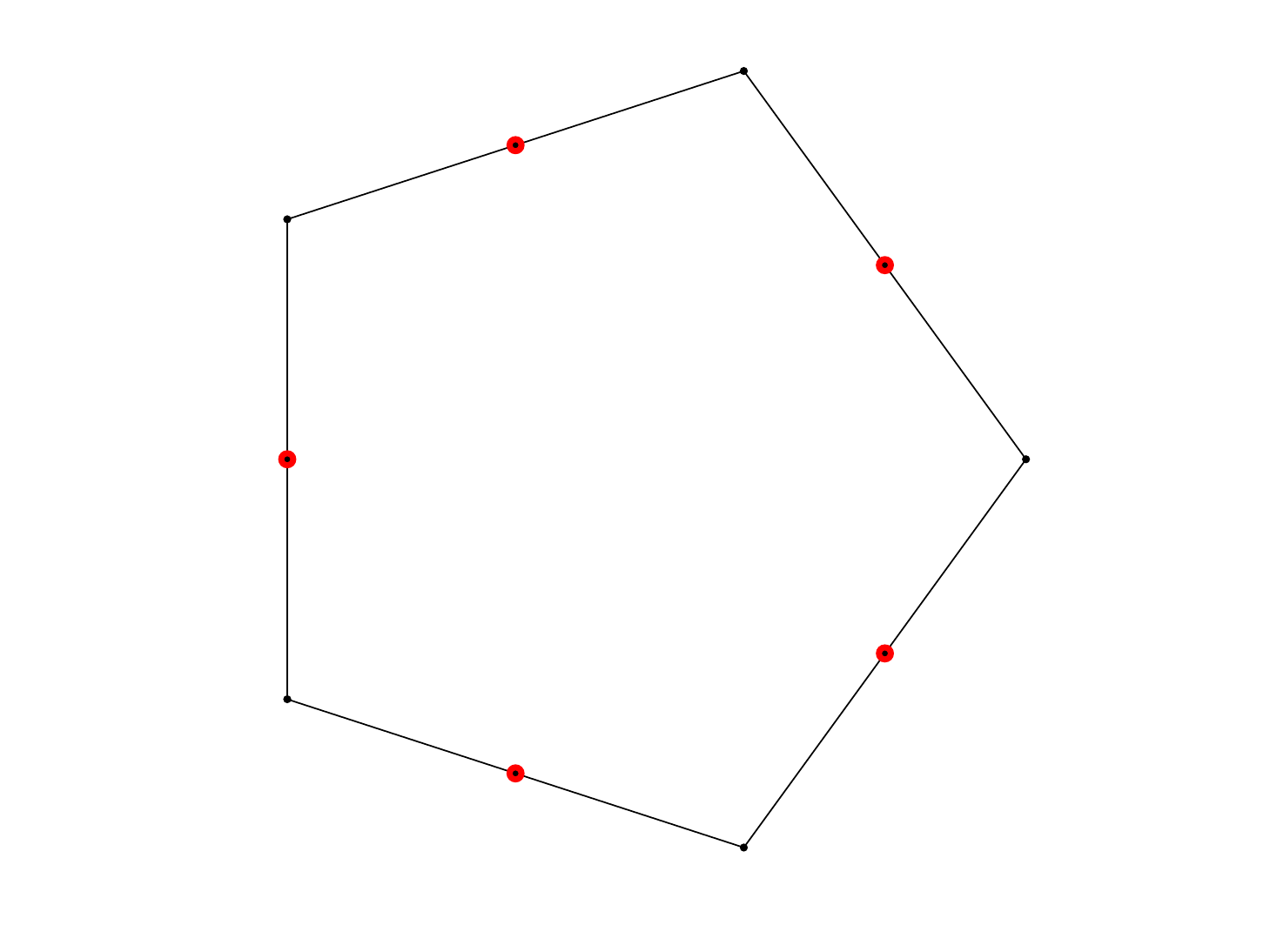}}\\
  \caption{Three types of mesh refinement}\label{fig:refineType}
\end{figure}

Our analysis also applies for the mesh assumption given in \cite{Chen-Huang-2018,Brezzi-Buffa-Lipnikov-2009} (even more general mesh assumptions), where each polygon admits a virtual quasi-uniform and regular triangulation. In this case, we refine the mesh as in Fig.~\ref{fig:refineType}~(c) by just adding midpoints on each edges, giving rise to a new mesh with each polygon having hanging nodes or collinear edges. It is evident that the construction in Lemma \ref{lem:vpexist} is still valid, which therefore implies the locking-free property and the optimal rates of convergence.

\subsection{The VEM in the enhancement virtual element space}

We can also consider the problem in the frequently used space $\widehat{\bb{H}}(\Omega)$ (see Eq.~\eqref{Hhat}). To this end, we first introduce a lifting space
\[
\widetilde{\bb{V}}_1(E) =\big\{
\bb{v} \in \bb{H}^1(E): \Delta \bb{v} \in (\mathbb{P}_0(E))^2, ~~ \partial_{\bb{n}}\bb{v} |_e \in (\mathbb{P}_0(e))^2, ~~e \subset \partial{E}\big\}
\]
with the degrees of freedom
\begin{itemize}
  \item the moments $\frac{1}{|e|} \int_e \bb{v} {\rm d}s$, $e \subset \partial E$;
  \item the moments $\frac{1}{|E|} \int_E \bb{v} {\rm d}x$.
\end{itemize}
and define the elliptic projection $\Pi_1^E$ as before. One easily finds that the redundant moments on each element are not involved in the computation of $\Pi_1^E$. We therefore introduce an enhancement virtual element space as (cf. \cite{Ahmad-Alsaedi-Brezzi-Marini-2013,Zhang-Zhao-Yang-Chen-2019})
\begin{equation}\label{W1E}
\bb{W}_1(E) =\Big\{
\bb{v} \in \widetilde{\bb{V}}_1(E):  \int_E \bb{v} {\rm d}x = \int_E \Pi_1^E \bb{v} {\rm d}x \Big\},
\end{equation}
with the global space given by
\begin{align}
\bb{W}_h = \Big\{\bb{v}_h\in \bb{L}^2(\Omega): \bb{v}_h|_E \in \bb{W}_1(E), ~E \in \mathcal{T}_h^*, \quad
\int_e [\bb{v}_h] {\rm d}s = \bb{0},~ e \in \mathcal{E}_h^{*,0}, \nonumber \\
 \int_{\Omega} \bb{v}_h {\rm d}s = \bb{0}, \quad
 \int_\Omega {\rm rot}_h~\bb{v}_h {\rm d}x = 0 \Big\}, \label{enhancement}
\end{align}
which obviously shares the same d.o.f.s with $\bb{V}_h$, i.e., the moments $\frac{1}{|e|} \int_e \bb{v} {\rm d}s$, $e \in \mathcal{E}_h^*$.
Our analysis still holds in such case since the first constraint is not used in the proof of the discrete Korn's inequality in Theorem \ref{thm:disKorn}.

\subsection{The locking-free VEM for the pure displacement problem}

The linear elasticity problem in the pure displacement formulation is
\[
\begin{cases}
- {\rm div}~\bb{\sigma}(\bb{u}) = \bb{f}  \quad & \mbox{in} ~~\Omega, \\
\bb{u}  = \bb{0}. \quad & \mbox{on} ~~\partial\Omega.\\
\end{cases}
\]
The locking-free VEM has the same formulation as the one given in \eqref{VEM}. One just needs to replace the global nonconforming virtual element space by
\begin{align}
\bb{U}_h = \Big\{\bb{v}_h\in \bb{L}^2(\Omega): \bb{v}_h|_E \in \bb{V}_1(E), ~E \in \mathcal{T}_h^*, \quad
\int_e [\bb{v}_h] {\rm d}s = \bb{0},~ e \in \mathcal{E}_h^* \label{Uh}
 \Big\},
\end{align}
where $\mathcal{E}_h^*$ denotes the set of all edges in $\mathcal{T}_h^*$. According to the continuity of the d.o.f.s, one easily obtains
\begin{equation}\label{rotint}
\int_\Omega {\rm rot}_h~\bb{v}_h {\rm d}x = 0
\end{equation}
for all $\bb{v}_h\in \bb{U}_h$, which is the crucial constraint in establishing the discrete Korn's inequality in Theorem \ref{thm:disKorn}.
For this reason, the arguments in Subsection \ref{subsec:coercivity} still hold, naturally leading to the robustness with respect to the Lam\'{e} constant and the optimal rates of convergence.

\subsection{A unified locking-free scheme both for the conforming and nonconforming VEMs}

In the literature of VEMs, the local bilinear form $({\rm div}~\bb{v}, {\rm div}~\bb{w})_E$ of the continuous variational problem
is usually approximated by $(\Pi_0^E{\rm div}~\bb{v}, \Pi_0^E{\rm div}~\bb{w})_E$ as in \eqref{L2ProDiv}, which, however, is proved to be locking-free only for $k\ge 2$ in \cite{Beirao-Brezzi-Marini-2013} for the conforming VEMs. By using the reduced integration technique, we in \cite{Huang-Lin-Yu-arxiv} proposed a conforming locking-free VEM in the lowest order case, where $\sum\limits_{E\subset K}(\Pi_0^E{\rm div}~\bb{v}, \Pi_0^E{\rm div}~\bb{w})_E$ is replaced by $(\Pi_0^K{\rm div}~\bb{v}, \Pi_0^K{\rm div}~\bb{w})_K$,  with $\Pi_0^K{\rm div}$ defined as
\begin{equation} \label{Pi0Kdiv}
\begin{cases}
\Pi_0^K{\rm div}: \bb{V}_1(K) \to \mathbb{P}_0(K), \quad \bb{v} \mapsto \Pi_0^K{\rm div}~\bb{v},  \\
\int_K (\Pi_0^K{\rm div}~\bb{v})p\,\mathrm{d}x = \int_K ({\rm div}~\bb{v})p\,\mathrm{d}x,\quad p \in \mathbb{P}_0(K),
\end{cases}
\end{equation}
which is similar to the definition of $\Pi_0^K{\rm rot}$ given in \eqref{L2ProRot}.
Inspired by the above treatments in \eqref{L2ProRot} and \eqref{Pi0Kdiv}, we intend to propose a unified locking-free scheme both for the conforming and nonconforming VEMs described as follows.

Let $\bb{S}_h$ be the global virtual element space. For clarity, we list the choices of $\bb{S}_h$ in the following:
\begin{enumerate}
  \item \textbf{Nonconforming VEMs}
  \begin{itemize}
    \item Pure traction problem in the original space (cf. \eqref{V1E} and \eqref{Vh})
    \begin{align*}
   \bb{S}_h = \bb{V}_h = \Big\{\bb{v}_h\in \bb{L}^2(\Omega): \bb{v}_h|_E \in \bb{V}_1(E), ~E \in \mathcal{T}_h^*, \quad
   \int_e [\bb{v}_h] {\rm d}s = \bb{0},~ e \in \mathcal{E}_h^{*,0},  \\
   \int_{\partial \Omega} \bb{v}_h {\rm d}s = \bb{0}, \quad
   \int_\Omega {\rm rot}_h~\bb{v}_h {\rm d}x = 0 \Big\}.
  \end{align*}
    \item Pure traction problem in the enhancement space (cf. \eqref{W1E} and \eqref{enhancement})
    \begin{align*}
  \bb{S}_h = \bb{W}_h = \Big\{\bb{v}_h\in \bb{L}^2(\Omega): \bb{v}_h|_E \in \bb{W}_1(E), ~E \in \mathcal{T}_h^*, \quad
\int_e [\bb{v}_h] {\rm d}s = \bb{0},~ e \in \mathcal{E}_h^{*,0},  \\
 \int_{\Omega} \bb{v}_h {\rm d}s = \bb{0}, \quad
 \int_\Omega {\rm rot}_h~\bb{v}_h {\rm d}x = 0 \Big\}.
\end{align*}
    \item Pure displacement problem (cf. \eqref{Uh})
    \begin{align*}
  \bb{S}_h = \bb{U}_h = \Big\{\bb{v}_h\in \bb{L}^2(\Omega): \bb{v}_h|_E \in \bb{V}_1(E), ~E \in \mathcal{T}_h^*, \quad
\int_e [\bb{v}_h] {\rm d}s = \bb{0},~ e \in \mathcal{E}_h^*
 \Big\}.
\end{align*}
  \end{itemize}
  \item \textbf{Conforming VEMs}
    \begin{itemize}
    \item Pure traction problem in the original space (cf. \eqref{V1cE} and \eqref{Vh})
    \begin{align*}
   \bb{S}_h = \bb{V}_h^c = \Big\{\bb{v}_h\in \bb{H}^1(\Omega): \bb{v}_h|_E \in \bb{V}_1^c(E), ~E \in \mathcal{T}_h^*, \quad
   \int_{\partial \Omega} \bb{v}_h {\rm d}s = \bb{0}, \quad
   \int_\Omega {\rm rot}~\bb{v}_h {\rm d}x = 0 \Big\}.
  \end{align*}
    \item Pure traction problem in the enhancement space
    \begin{align*}
    \bb{S}_h = \bb{W}_h^c = \Big\{\bb{v}_h\in \bb{H}^1(\Omega): \bb{v}_h|_E \in \bb{W}_1^c(E), ~E \in \mathcal{T}_h^*, \quad
   \int_{\Omega} \bb{v}_h {\rm d}s = \bb{0}, \quad
   \int_\Omega {\rm rot}~\bb{v}_h {\rm d}x = 0 \Big\},
  \end{align*}
 where
 \begin{equation*}
\bb{W}_1^c(E) = \Big\{
\bb{v} \in \bb{H}^1(E): \Delta \bb{v} \in (\mathbb{P}_0(E))^2,  \quad
  \int_E \bb{v} {\rm d}x = \int_E \Pi_1^E \bb{v} {\rm d}x\Big\}.
\end{equation*}
    \item Pure displacement problem (cf. \eqref{Vh0})
  \[
  \bb{S}_h = \bb{U}_h^c = \bb{V}_h^0 = \{\bb{v}\in \bb{H}_0^1(\Omega): \bb{v} \in \bb{V}_1^c(E), ~E \in \mathcal{T}_h^* \}.
  \]
  \end{itemize}
\end{enumerate}

The unified locking-free VEM of the variational problem \eqref{Elasticity_TensorEq_Var} is: find $\bb{u}_h\in\bb{S}_h$ such that
\begin{equation}\label{unifiedVEM}
a_h(\bb{u}_h, \bb{v}_h) = \langle \bb{f}_h,\,\bb{v}_h \rangle, \quad  \bb{v}_h\in\bb{S}_h,
\end{equation}
where
\[
a_h(\bb{u}_h, \bb{v}_h) = 2\mu a_{\mu,h}(\bb{u}_h, \bb{v}_h) + \lambda a_{\lambda,h}(\bb{u}_h, \bb{v}_h),
\]
with
\[a_{\mu,h}^K(\bb{v}, \bb{w})
= \sum\limits_{E\subset K} \Big( (\nabla {\Pi}_1^E\bb{v} , \nabla {\Pi}_1^E\bb{w} )_E
+ S^E(\bb{v}-\nabla  \Pi_1^E\bb{v} , \bb{w}-\nabla \Pi_1^E\bb{w}) \Big)
- \frac{1}{2} ( \Pi_0^K {\rm rot}~\bb{v}, \Pi_0^K {\rm rot}~\bb{w} )_K,\]
\[a_{\lambda,h}^K(\bb{v}, \bb{w}) = (\Pi_0^K{\rm div}~\bb{v}, \Pi_0^K{\rm div}~\bb{w})_E, \qquad S^E(\bb{v}, \bb{w}) = \bb{\chi}(\bb{v})\cdot \bb{\chi}(\bb{w}).\]
The approximation of the right hand side is the same as \eqref{rhs}.

Let us briefly analyze the locking-free property and the optimal error estimates. For the bilinear form $a_{\mu,h}^K(\bb{v}, \bb{w})$, the crucial step is to establish the discrete Korn's inequality in Theorem \ref{thm:disKorn} so as to ensure the coercivity in Lax-Milgram lemma, which has been justified for the nonconforming methods. For the conforming methods, it is obvious that the constraint \eqref{rotint} holds for all cases, thus yielding the Korn's inequality. With the help of the coercivity, we can derive the following abstract lemma following a standard calculation.
      \begin{lemma}	\label{lem:Strangall}
Let $\bb{u}_h\in\bb{V}_h$ be the solution of the discrete problem \eqref{unifiedVEM} and $\bb{u}$ the weak solution of problem \eqref{model}. Then for all $\bb{u}_{\mathcal{I}} \in \bb{S}_h$ and for any piecewise polynomial $\bb{u}_{\pi}\in (\mathbb{P}_k(\mathcal{T}_h^*))^2$, there holds
	\begin{equation}\label{Strang}
	| \bb{u} - \bb{u}_h |_{1,h}
	\lesssim  | \bb{u} - \bb{u}_{\mathcal{I}} |_{1,h}
	+ | \bb{u} - \bb{u}_{\pi} |_{1,h}
	+ \lambda \| {\rm div}\bb{u} - \Pi_0{\rm div} \bb{u}_{\mathcal{I}} \|_0
	+ \| \bb{f} - \bb{f}_h \|_{\bb{V}_h^{'}} + E_h,
	\end{equation}
	where
\[\| \bb{f} - \bb{f}_h \|_{\bb{V}_h^{'}}
	= \sup_{\bb{v}\in\bb{V}_h}
	\frac{|(\bb{f},\,\bb{v}_h) - \langle\bb{f}_h,\,\bb{v}_h\rangle|}{|\bb{v}_h|_{1,h}},
\quad E_h = \mathop {\sup}\limits_{\boldsymbol{v}_h \in \boldsymbol{V}_h} \frac{|a(\boldsymbol{u},\boldsymbol{v}_h) - (\boldsymbol{f},\boldsymbol{v}_h)|}{|\bb{v}_h|_{1,h}}.
\]
\end{lemma}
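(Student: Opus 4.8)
The plan is to adapt the standard Strang-type (second Strang lemma) argument for nonconforming methods, which is exactly the route used in Lemma~\ref{Thm1_Elasticity_TensorEq_VEM_Conver_Est} and in \cite{Beirao-Brezzi-Marini-2013,Zhang-Zhao-Yang-Chen-2019}, the only new feature being that $\bb{u}_{\mathcal{I}}$ is now an arbitrary element of $\bb{S}_h$ (not necessarily the canonical interpolant) and that $a_{\lambda,h}$ is built from $\Pi_0^K{\rm div}$ rather than the element-wise $\Pi_0^E{\rm div}$. First I would set $\bb{\delta}_h = \bb{u}_h - \bb{u}_{\mathcal{I}} \in \bb{S}_h$; by the triangle inequality it suffices to estimate $|\bb{\delta}_h|_{1,h}$, and by the coercivity (Theorem~\ref{thm:coercivity} for the nonconforming spaces, and the analogous statement obtained from \eqref{rotint} together with Theorems~\ref{thm:normeq} and~\ref{thm:disKorn} for the conforming spaces) we have $|\bb{\delta}_h|_{1,h}^2 \lesssim a_h(\bb{\delta}_h,\bb{\delta}_h)$.

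Next I would expand $a_h(\bb{\delta}_h,\bb{\delta}_h) = a_h(\bb{u}_h,\bb{\delta}_h) - a_h(\bb{u}_{\mathcal{I}},\bb{\delta}_h)$. For the first term use the discrete equation \eqref{unifiedVEM} to replace $a_h(\bb{u}_h,\bb{\delta}_h)$ by $\langle \bb{f}_h,\bb{\delta}_h\rangle$, and then insert and subtract $(\bb{f},\bb{\delta}_h)$ and $a(\bb{u},\bb{\delta}_h)$; the mismatch $|\langle\bb{f}_h,\bb{\delta}_h\rangle - (\bb{f},\bb{\delta}_h)|$ contributes the $\|\bb{f}-\bb{f}_h\|_{\bb{V}_h'}$ term and $|a(\bb{u},\bb{\delta}_h)-(\bb{f},\bb{\delta}_h)|$ contributes the consistency error $E_h$ (this is the nonconforming term, zero for the conforming spaces). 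For the remaining piece $a(\bb{u},\bb{\delta}_h) - a_h(\bb{u}_{\mathcal{I}},\bb{\delta}_h)$, I would split into the $2\mu$-part and the $\lambda$-part. In the $\mu$-part, insert a piecewise polynomial $\bb{u}_\pi$: using the polynomial consistency of $a_{\mu,h}^K$ (i.e. $a_{\mu,h}^K(\bb{p},\bb{w}) = (\bb{\varepsilon}(\bb{p}),\bb{\varepsilon}_h^*(\bb{w}))_K$ for $\bb{p}$ linear, which follows from \eqref{EllipticProj}, \eqref{L2ProRot} and \eqref{rotstar}) together with the stability $a_{\mu,h}^K(\bb{w},\bb{w})\eqsim\|\bb{\varepsilon}_h^*(\bb{w})\|_{0,K}^2 \lesssim |\bb{w}|_{1,K}^2$ from Theorem~\ref{thm:normeq}, one bounds this by $(|\bb{u}-\bb{u}_{\mathcal{I}}|_{1,h} + |\bb{u}-\bb{u}_\pi|_{1,h})|\bb{\delta}_h|_{1,h}$ after a Cauchy--Schwarz. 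In the $\lambda$-part, write $\lambda\big(({\rm div}\,\bb{u},{\rm div}_h\bb{\delta}_h) - \sum_K(\Pi_0^K{\rm div}\,\bb{u}_{\mathcal{I}},\Pi_0^K{\rm div}\,\bb{\delta}_h)_K\big)$; since $\Pi_0^K$ is an $L^2$-projection one has $\sum_K(\Pi_0^K{\rm div}\,\bb{u}_{\mathcal{I}},\Pi_0^K{\rm div}\,\bb{\delta}_h)_K = \sum_K(\Pi_0^K({\rm div}\,\bb{u}_{\mathcal{I}}),{\rm div}\,\bb{\delta}_h)_K$ and, replacing ${\rm div}\,\bb{u}$ by $\Pi_0({\rm div}\,\bb{u})$ against the piecewise-constant-on-$\mathcal{T}_h$ quantity $\Pi_0^K{\rm div}\,\bb{\delta}_h$, the difference is controlled by $\lambda\|{\rm div}\,\bb{u} - \Pi_0{\rm div}\,\bb{u}_{\mathcal{I}}\|_0\,|\bb{\delta}_h|_{1,h}$ after noting $\|\Pi_0^K{\rm div}\,\bb{\delta}_h\|_{0,K}\le\|{\rm div}\,\bb{\delta}_h\|_{0,K}\lesssim|\bb{\delta}_h|_{1,K}$.

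Collecting the five contributions and dividing through by $|\bb{\delta}_h|_{1,h}$ gives \eqref{Strang}. The only genuinely delicate point—and the one I would be most careful with—is the $\lambda$-term: because the divergence term now uses the \emph{coarse-mesh} projection $\Pi_0^K$ while the interpolant $\bb{u}_{\mathcal{I}}$ lives on the fine mesh $\mathcal{T}_h^*$, one must verify that $\Pi_0$ acting element-wise on $\mathcal{T}_h$ commutes appropriately with the divergence evaluated against test functions in $\bb{S}_h$, i.e. that the self-adjointness trick $(\Pi_0^K a,\Pi_0^K b)_K = (\Pi_0^K a, b)_K$ is legitimately applied, and that $\Pi_0{\rm div}\,\bb{u}_{\mathcal{I}}$ can in turn be compared to $\Pi_0{\rm div}\,\bb{u}$ with an $O(h)$ error using the interpolation estimate for $\bb{u}_{\mathcal{I}}$ and the regularity \eqref{regularity}; everything else is a verbatim repetition of the calculation in \cite[Theorem~6.1]{Zhang-Zhao-Yang-Chen-2019} and Lemma~\ref{Thm1_Elasticity_TensorEq_VEM_Conver_Est}, so I would simply refer to those sources for the routine parts.
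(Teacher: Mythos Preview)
Your proposal is correct and follows exactly the route the paper itself indicates: the paper gives no detailed proof of Lemma~\ref{lem:Strangall}, stating only that it is obtained ``following a standard calculation'' from the coercivity, in the same spirit as Lemma~\ref{Thm1_Elasticity_TensorEq_VEM_Conver_Est} and \cite[Theorem~6.1]{Zhang-Zhao-Yang-Chen-2019}. Your sketch (coercivity on $\bb{\delta}_h=\bb{u}_h-\bb{u}_{\mathcal{I}}$, insertion of $(\bb{f},\bb{\delta}_h)$ and $a(\bb{u},\bb{\delta}_h)$, splitting into the $\mu$- and $\lambda$-parts, and handling the coarse-mesh projection $\Pi_0^K$ via its self-adjointness) is precisely that standard calculation, and in fact supplies more detail than the paper does; the only minor imprecision is the exact form of the polynomial consistency identity for $a_{\mu,h}^K$ (for $\bb{p}$ linear on each $E$ one gets $a_{\mu,h}^K(\bb{p},\bb{w})=(\bb{\varepsilon}_h^*(\bb{p}),\bb{\varepsilon}_h^*(\bb{w}))_K$ rather than $(\bb{\varepsilon}(\bb{p}),\bb{\varepsilon}_h^*(\bb{w}))_K$), but this does not affect the final bound.
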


For the second term $a_{\lambda,h}^K(\bb{v}, \bb{w})$, the fundamental idea is to prove that there exists a VEM function $\bb{u}_{\mathcal{I}} \in \bb{S}_h$ such that
  \[\Pi_0^K{\rm div} \bb{u}_{\mathcal{I}} = \Pi_0^K{\rm div} \bb{u}, \quad
  |\bb{u}-\bb{u}_{\mathcal{I}}|_{1,h} \lesssim h |\bb{u}|_2,\]
  where $\bb{u}\in \bb{H}^2(\Omega)$ is the exact solution.
  The above result is trivial for the nonconforming methods since we can take $\bb{u}_{\mathcal{I}} = \bb{u}_I$ to be the interpolation of $\bb{u}$ in $\bb{S}_h$. For the conforming methods, please refer to \cite{Huang-Lin-Yu-arxiv}. Thus, the regularity estimate \eqref{regularity} yields
  \[\lambda \| {\rm div}\bb{u} - \Pi_0{\rm div} \bb{u}_{\mathcal{I}} \|_0
   = \lambda \| {\rm div}\bb{u} - \Pi_0{\rm div} \bb{u} \|_0 \lesssim \lambda h \|{\rm div} \bb{u}\|_0
   \lesssim h\|\bb{f}\|_0,
   \]
   as required.

\section{Numerical examples}

In this section, we report the performance of our proposed virtual element method by testing the accuracy and the robustness with respect to the Lam\'{e} constant $\lambda$. For simplicity, we only consider the nonconforming VEMs. One can refer to \cite{Huang-Lin-Yu-arxiv} for the test of the conforming methods.
Unless otherwise specified, the domain $\Omega$ is taken as the unit square $(0,1)^2$, and the Lam\'{e} constants are set as $\lambda = 10^{10}$ and $\mu=1$. All examples are implemented in MATLAB R2019b. Our code is available from GitHub (\url{https://github.com/Terenceyuyue/mVEM}) as part of the mVEM package which contains efficient and easy-following codes for various VEMs published in the literature. The subroutine {\color{gray} elasticityVEM\_NCreducedIntegration.m} is used to compute the numerical solutions and the test script {\color{gray} main\_elasticityVEM\_NCreducedIntegration.m} verifies the convergence rates.

\begin{figure}[!htb]
  \centering
  \subfigure{\includegraphics[scale=0.55,trim = 50 0 50 0,clip]{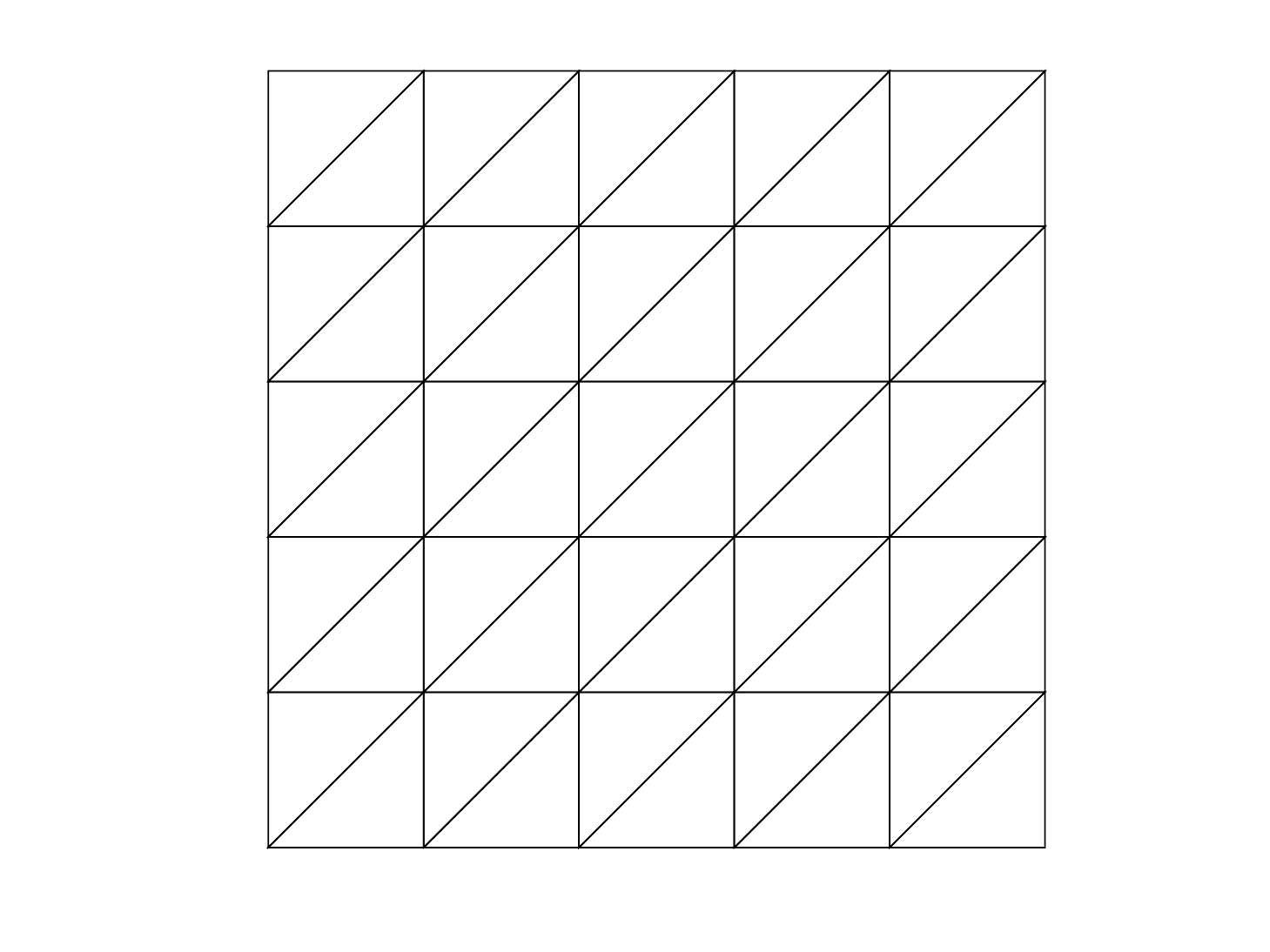}}
  \subfigure{\includegraphics[scale=0.55,trim = 50 0 50 0,clip]{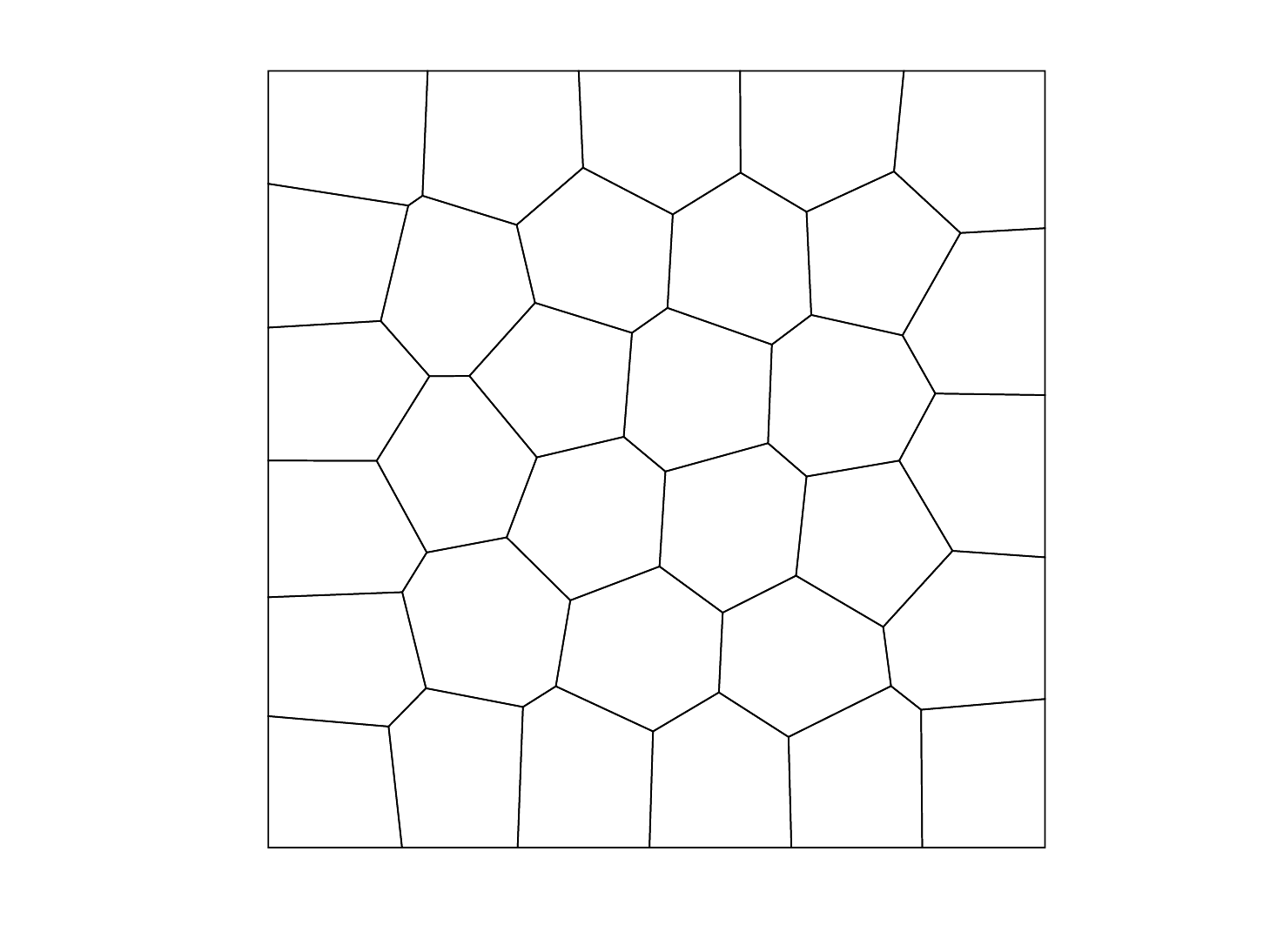}}\\
  \caption{The uniform triangular (left) and unstructured polygonal (right) meshes.}\label{fig:mesh}
\end{figure}

\subsection{Implementation of the proposed VEM} \label{subsec:implementation}

The discrete variational problem is given by \eqref{VEM}, where the constraints
\[\int_{\partial \Omega} \bb{u}_h{\rm d}s = \bb{0} \quad \mbox{and} \quad
\int_\Omega {\rm rot}_h~\bb{u}_h {\rm d}x = 0\]
are not naturally imposed. To do so, we introduce Lagrange multipliers $\beta_i \in \mathbb{R}$ and consider the augmented variational formulation: Find $(\boldsymbol{u}_h,\beta_1, \beta_2,\beta_3)\in \bb{V}_h \times \mathbb{R}\times \mathbb{R}\times \mathbb{R}$ such that
\begin{equation}\label{augVEM}
\begin{cases}
a_h(\bb{u}_h,\bb{v}_h) + \beta_1\int_{\partial \Omega} \bb{u}_{h,1}{\rm d}s
  + \beta_2\int_{\partial \Omega} \bb{u}_{h,2}{\rm d}s
  + \beta_3 \int_\Omega {\rm rot}~\bb{u}_h {\rm d}x & = \langle \bb{f}_h, \bb{v}_h \rangle, \qquad \bb{v}_h\in V_h, \\
\mu_1\int_{\partial \Omega} \bb{v}_{h,1}{\rm d}s & = 0, \qquad \mu_1 \in \mathbb{R},\\
\mu_2\int_{\partial \Omega} \bb{v}_{h,2}{\rm d}s & = 0, \qquad \mu_2 \in \mathbb{R},\\
\mu_3\int_\Omega {\rm rot}_h~\bb{v}_h {\rm d}x & = 0, \qquad \mu_3 \in \mathbb{R},\\
\end{cases}
\end{equation}
where $\bb{u}_h = (\bb{u}_{h,1}, \bb{u}_{h,2})^\intercal$ and $\bb{v}_h = (\bb{v}_{h,1}, \bb{v}_{h,2})^\intercal$.

Let $\bb{\varphi}_i$, $i=1,\cdots, N$ be the nodal basis function of $\bb{V}_h$, where $N$ is the dimension of $\bb{V}_h$. Then we can write
\[\bb{u}_h = \sum\limits_{i=1}^N \chi_i(\bb{u})\bb{\varphi}_i =: \bb{\varphi}^\intercal \bb{\chi}(\bb{u}).\]
Plug the above equation in \eqref{augVEM}, and take $\bb{v}_h = \bb{\varphi}_j$. We have
\[
\begin{cases}
\sum\limits_{i=1}^Na_h(\bb{\varphi}_i,\bb{\varphi}_j)\chi_i
 + \beta_1\int_{\partial \Omega} \bb{\varphi}_{i,1}{\rm d}s
  + \beta_2\int_{\partial \Omega} \bb{\varphi}_{i,2}{\rm d}s
  + \beta_3 \int_\Omega {\rm rot}~\bb{\varphi}_i {\rm d}x & = 0, \quad j=1,\cdots,N, \\
\sum\limits_{i=1}^N \int_{\partial \Omega} \bb{\varphi}_{i,1}{\rm d}s \chi_i & = 0, \\
\sum\limits_{i=1}^N \int_{\partial \Omega} \bb{\varphi}_{i,2}{\rm d}s \chi_i & = 0, \\
\sum\limits_{i=1}^N \int_\Omega {\rm rot}_h~\bb{\varphi}_i{\rm d}x \chi_i & = 0 .
\end{cases}
\]
Let
\[\bb{A} = \Big(a_h(\bb{\varphi}_j,\bb{\varphi}_i)\Big)_{N\times N}, \quad
\bb{d}_1 = \Big(\int_{\partial \Omega} \bb{\varphi}_{i,1}{\rm d}s\Big)_{N\times 1}, \quad
\bb{d}_2 = \Big(\int_{\partial \Omega} \bb{\varphi}_{i,2}{\rm d}s\Big)_{N\times 1}, \quad
\bb{d}_3 = \Big(\int_\Omega {\rm rot}_h~\bb{\varphi}_i{\rm d}x\Big)_{N\times 1}
\]
and
\[\bb{f} = \Big(  \langle \bb{f}_h, \bb{\varphi}_i \rangle \Big)_{N\times 1}.\]
The linear system can be written in matrix form:
\[
\begin{bmatrix}
\bb{A}              & \bb{d}_1 & \bb{d}_2  & \bb{d}_3 \\
\bb{d}_1^\intercal  &     0    &           & \\
\bb{d}_2^\intercal  &          &     0     & \\
\bb{d}_3^\intercal  &          &           & 0\\
\end{bmatrix}
\begin{bmatrix}
\bb{\chi} \\
\beta_1 \\
\beta_2 \\
\beta_3
\end{bmatrix}
=\begin{bmatrix}
\bb{f} \\
0 \\
0 \\
0
\end{bmatrix}.
\]

\subsection{The pure traction problem in the original space}

We first solve the pure traction problem on two different kinds of meshes. One is the uniform triangulation and the other is the unstructured polygonal mesh as shown in Fig.~\ref{fig:mesh}.

\begin{example}\label{example1}
The right-hand side $\bb{f}$ and the boundary conditions are chosen in such a way that the exact solution is
  \[\bb{u}(x,y) =  \begin{bmatrix}
  ( - 1 + \cos 2\pi x)\sin2\pi y \\
   - ( - 1 + \cos 2\pi y)\sin2\pi x
\end{bmatrix}  + \frac{1}{1 + \lambda}\sin \pi x\sin \pi y \begin{bmatrix}
  1 \\
  1
\end{bmatrix}.\]
\end{example}

Note that for the pure traction problem the solutions are unique only up to an additive function in $RM(\Omega)$, which affects the evaluation of the errors in the $L^2$ norm. For this reason, we replace the exact solution by $\bb{u}-\bb{p}$ so that the constraints in \eqref{Htilde} are satisfied, where $\bb{p} \in RM(\Omega)$ is given by
\[\bb{p} = c_0 \begin{bmatrix} 1 \\ 0 \end{bmatrix} + c_1 \begin{bmatrix} 0 \\ 1 \end{bmatrix}
+ c_2 \begin{bmatrix} -y \\ x \end{bmatrix}\]
with
\begin{align*}
&c_2 = \frac{1}{2}\int_\Omega {\rm rot}~\bb{u} {\rm d}x, \\
&c_1 = \frac{1}{|\partial \Omega|} \Big( \int_{\partial \Omega} u_2 {\rm d}s - c_2 \int_{\partial \Omega} x {\rm d}s \Big), \\
&c_0 = \frac{1}{|\partial \Omega|} \Big( \int_{\partial \Omega} u_1 {\rm d}s + c_2 \int_{\partial \Omega} y {\rm d}s \Big).
\end{align*}

\begin{figure}[!htb]
  \centering
  \subfigure[Polygonal mesh]{\includegraphics[scale=0.5]{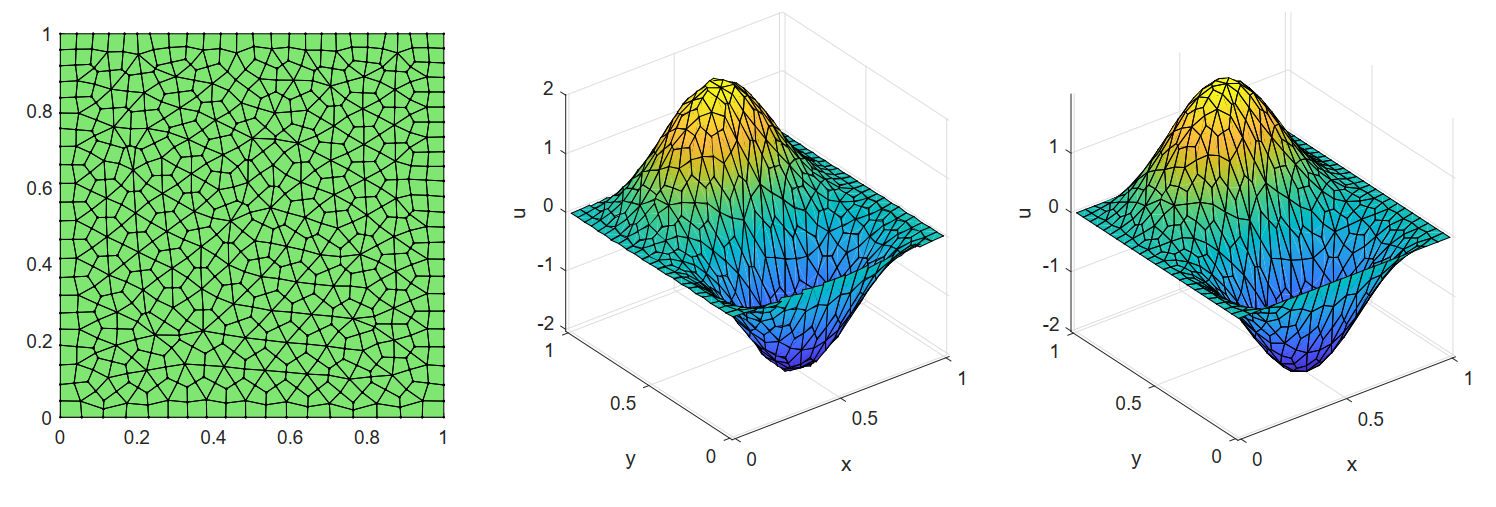}}\\
  \subfigure[Triangular mesh]{\includegraphics[scale=0.5]{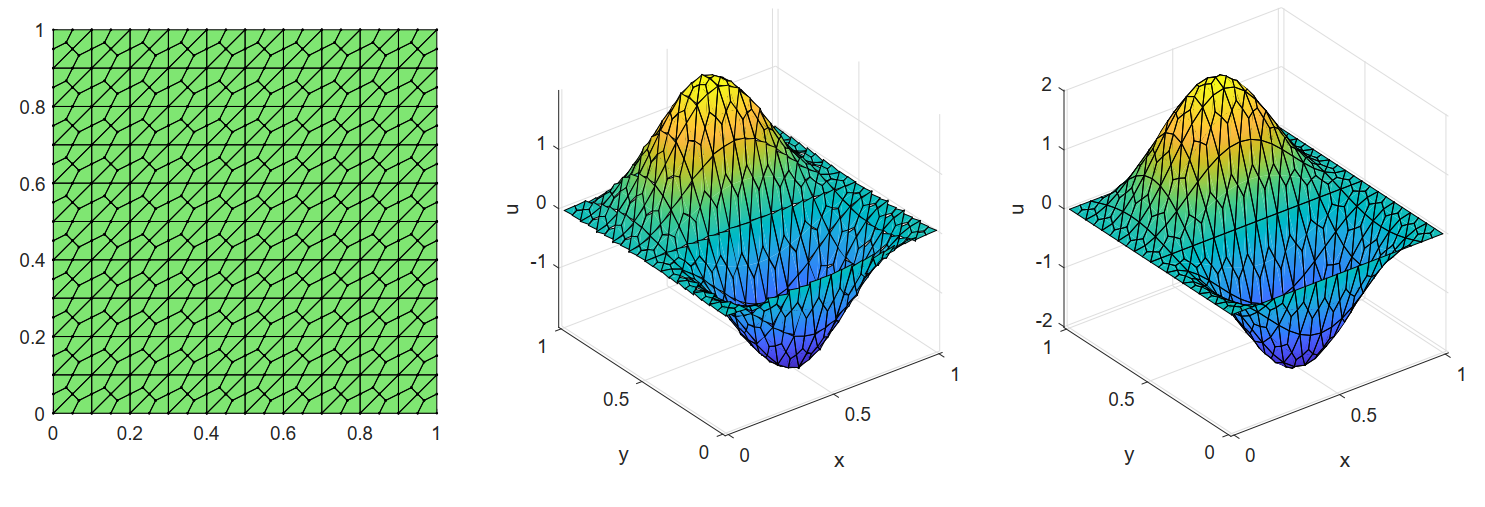}}\\
  \caption{Numerical and exact solutions of Example \ref{example1} with refineType=1. }\label{fig:Ex1_PolyTri}
\end{figure}

Let $\bb{u}$ be the exact solution of \eqref{model} and $\bb{u}_h$ the discrete solution of the proposed VEM \eqref{VEM}.
Since the VEM solution $\bb{u}_h$ is not explicitly known inside the polygonal elements, as in \cite{Beirao-Lovadina-Russo-2017} we will evaluate the errors by comparing the exact solution $\bb{u}$ with the elliptic projection $\Pi _1^E \bb{u}_h$. In this way, the discrete $H^1$ and $L^2$ errors are quantified by
\begin{equation*}
{\rm ErrH1} = \left( \sum\limits_{E \in \mathcal{T}_h^*} | \bb{u} - \Pi_1^E \bb{u}_h |_{1,E}^2  \right)^{1/2} \quad \mbox{and} \quad {\rm ErrL2} = \left( \sum\limits_{E \in \mathcal{T}_h^*} \| \bb{u} - \Pi_1^E \bb{u}_h \|_{0,E}^2 \right)^{1/2},
\end{equation*}
respectively.

To test the accuracy of the proposed method we first consider a sequence of polygonal meshes, which is a Centroidal Voronoi Tessellation of the unit square in 32, 64, 128, 256 and 512 polygons. These meshes are generated by the MATLAB toolbox - PolyMesher introduced in \cite{Talischi-Paulino-Pereira-2012}. For $\lambda=10^{10}$ and $\mu = 1$, we report the nodal values of the elliptic projection $\Pi_1^E \bb{u}_{h,1}$ in Fig.~\ref{fig:Ex1_PolyTri}~(a), where the first type of mesh refinement in Fig.~\ref{fig:refineType}~(a) is used. The convergence order of the errors against the mesh size $h$ is shown in Fig.~\ref{fig:Ex1_PolyTriRat}~(a). We also report the results for triangulation in Fig.~\ref{fig:Ex1_PolyTri}~(b) and Fig.~\ref{fig:Ex1_PolyTriRat}~(b). Generally speaking, $h$ is proportional to $N^{-1/2}$, where $N$ is the total number of elements in the mesh. For each fixed $\lambda$ and $\mu$ the convergence rate with respect to $h$ is estimated by assuming ${\rm Err}(h) = ch^{\alpha}$, and by computing a least squares fit to this log-linear relation. As observed in Fig.~\ref{fig:Ex1_PolyTriRat}, the convergence rate is linear with respect to the $H^1$ norm, and the VEM ensures the quadratic convergence for the $L^2$ norm for different values of $\lambda$, which is consistent with the theoretical prediction in Theorem \ref{thm:err}.

\begin{figure}[!htb]
  \centering
  \subfigure[Polygonal mesh]{\includegraphics[height=5cm]{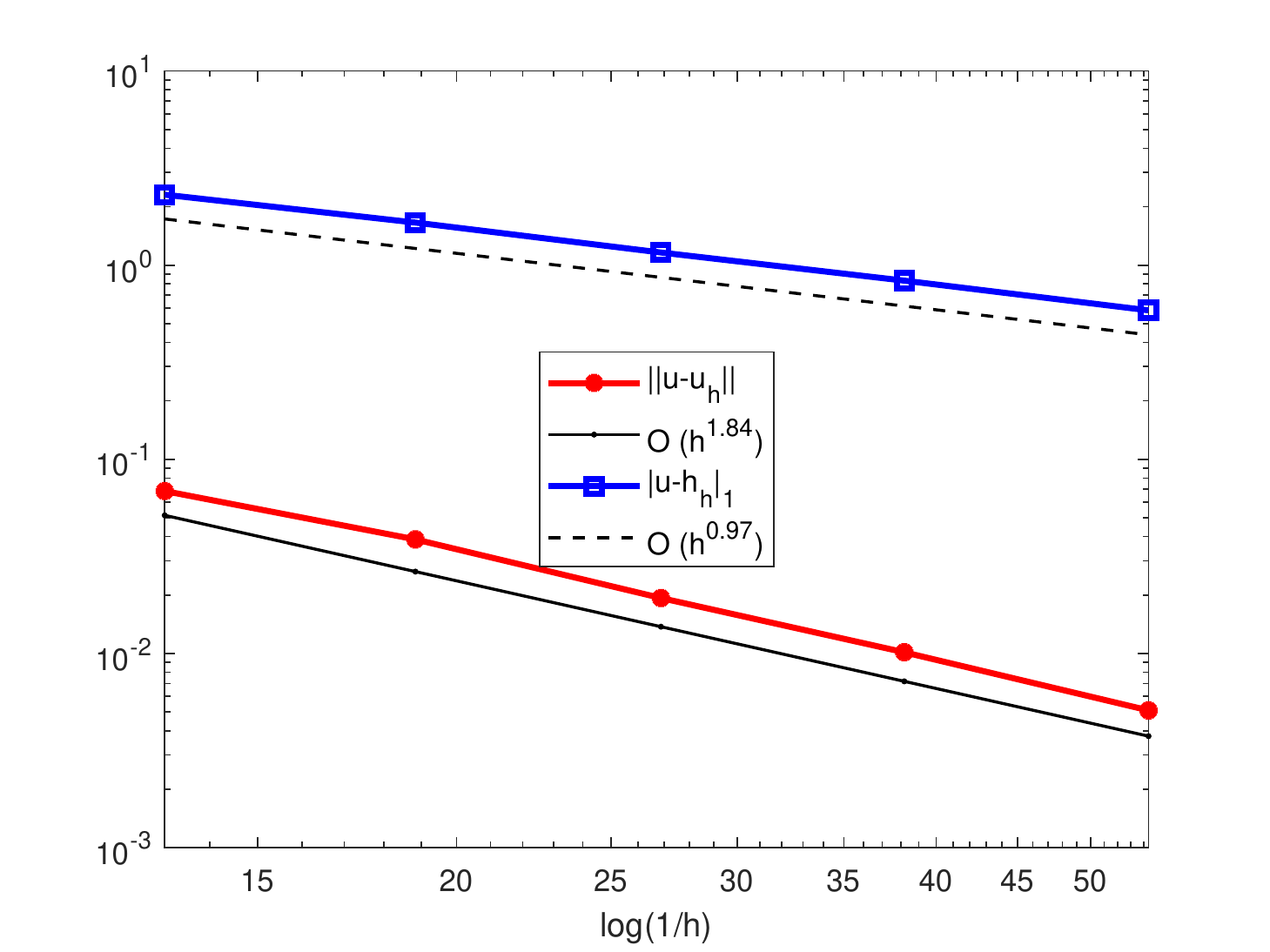}}
  \subfigure[Triangular mesh]{\includegraphics[height=5cm]{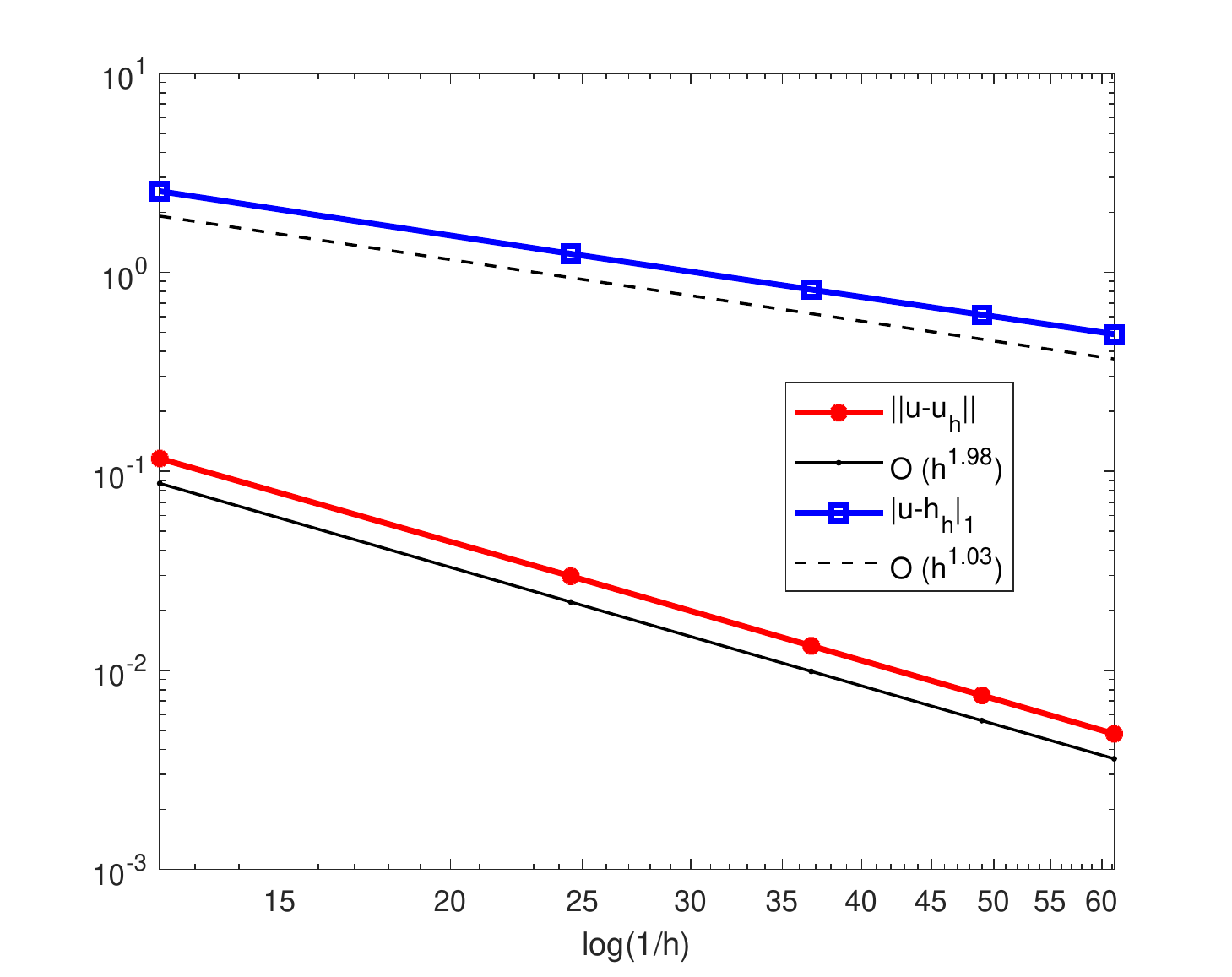}}\\
  \caption{The convergence rate of Example \ref{example1} with refineType=1}\label{fig:Ex1_PolyTriRat}
\end{figure}

We now consider the last two types of mesh refinement in Fig.~\ref{fig:refineType} and only report the numerical results for triangular meshes. As observed in Tab.~\ref{tab:Ex1_Tri2} and Tab.~\ref{tab:Ex1_Tri3}, the optimal rates of convergence are achieved for both subdivisions in the nearly incompressible case.

\begin{table}[!htb]
  \centering
  \caption{Convergence rate of Example \ref{example1} w.r.t. $L^2$ norm for triangulation with refineType=2}\label{tab:Ex1_Tri2}
  \begin{tabular}{ccccccccccccccccc}
  \toprule[0.2mm]
  $\lambda \backslash h$ & $1/5$   &$1/10$   &$1/15$   &$1/20$   &$1/25$  & Rate\\
  \midrule[0.3mm]
   $10^0$     & 1.1208e-01  &2.9679e-02  &1.3421e-02  &7.6091e-03 &4.8911e-03 & 1.95\\
   $10^2$     & 1.0645e-01  &2.8443e-02  &1.2860e-02  &7.2846e-03 &4.6791e-03 & 1.94\\
   $10^4$     & 1.0640e-01  &2.8434e-02  &1.2856e-02  &7.2822e-03 &4.6775e-03 & 1.94\\
   $10^6$     & 1.0640e-01  &2.8434e-02  &1.2855e-02  &7.2821e-03 &4.6775e-03 & 1.94\\
   $10^8$     & 1.0640e-01  &2.8434e-02  &1.2855e-02  &7.2820e-03 &4.6777e-03 & 1.94\\
  \bottomrule[0.2mm]
\end{tabular}
\end{table}

\begin{table}[!htb]
  \centering
  \caption{Convergence rate of Example \ref{example1} w.r.t. $L^2$ norm for triangulation with refineType=3}\label{tab:Ex1_Tri3}
  \begin{tabular}{ccccccccccccccccc}
  \toprule[0.2mm]
  $\lambda \backslash h$ & $1/5$   &$1/10$   &$1/15$   &$1/20$   &$1/25$  & Rate\\
  \midrule[0.3mm]
   $10^0$     & 3.0839e-01  &7.6958e-02   &3.4534e-02   &1.9499e-02   &1.2500e-02 & 1.99\\
   $10^2$     & 2.9286e-01  &7.2817e-02   &3.2637e-02   &1.8411e-02   &1.1795e-02 & 1.99\\
   $10^4$     & 2.9265e-01  &7.2761e-02   &3.2611e-02   &1.8396e-02   &1.1785e-02 & 1.99\\
   $10^6$     & 2.9265e-01  &7.2760e-02   &3.2611e-02   &1.8396e-02   &1.1785e-02 & 1.99\\
   $10^8$     & 2.9265e-01  &7.2760e-02   &3.2611e-02   &1.8396e-02   &1.1785e-02 & 1.99\\
  \bottomrule[0.2mm]
\end{tabular}
\end{table}

\begin{example}\label{divfree}
We further investigate the impact of Lam\'{e} constant $\lambda$ to solve a completely
incompressible problem. The exact solution is given by
 \[u_1 = - \sin^3(\pi x) \sin(2\pi y) \sin(\pi y), \quad u_2 = \sin(2\pi x) \sin(\pi x) \sin^3(\pi y).\]
 One can check that ${\rm div}~\bb{u} = 0$ and $\bb{f}$ is independent of $\lambda$.
\end{example}

We test the performance of our VEMs for polygonal meshes and triangulations with $\lambda = 10^{10}$ and $\mu=1$ fixed. The error orders are shown in Fig.~\ref{fig:divfree}, from which we observe that the optimal rates of convergence are achieved for both subdivisions in the completely incompressible limiting problem.

\begin{figure}[!htb]
  \centering
  \subfigure[Polygonal mesh]{\includegraphics[height=5cm]{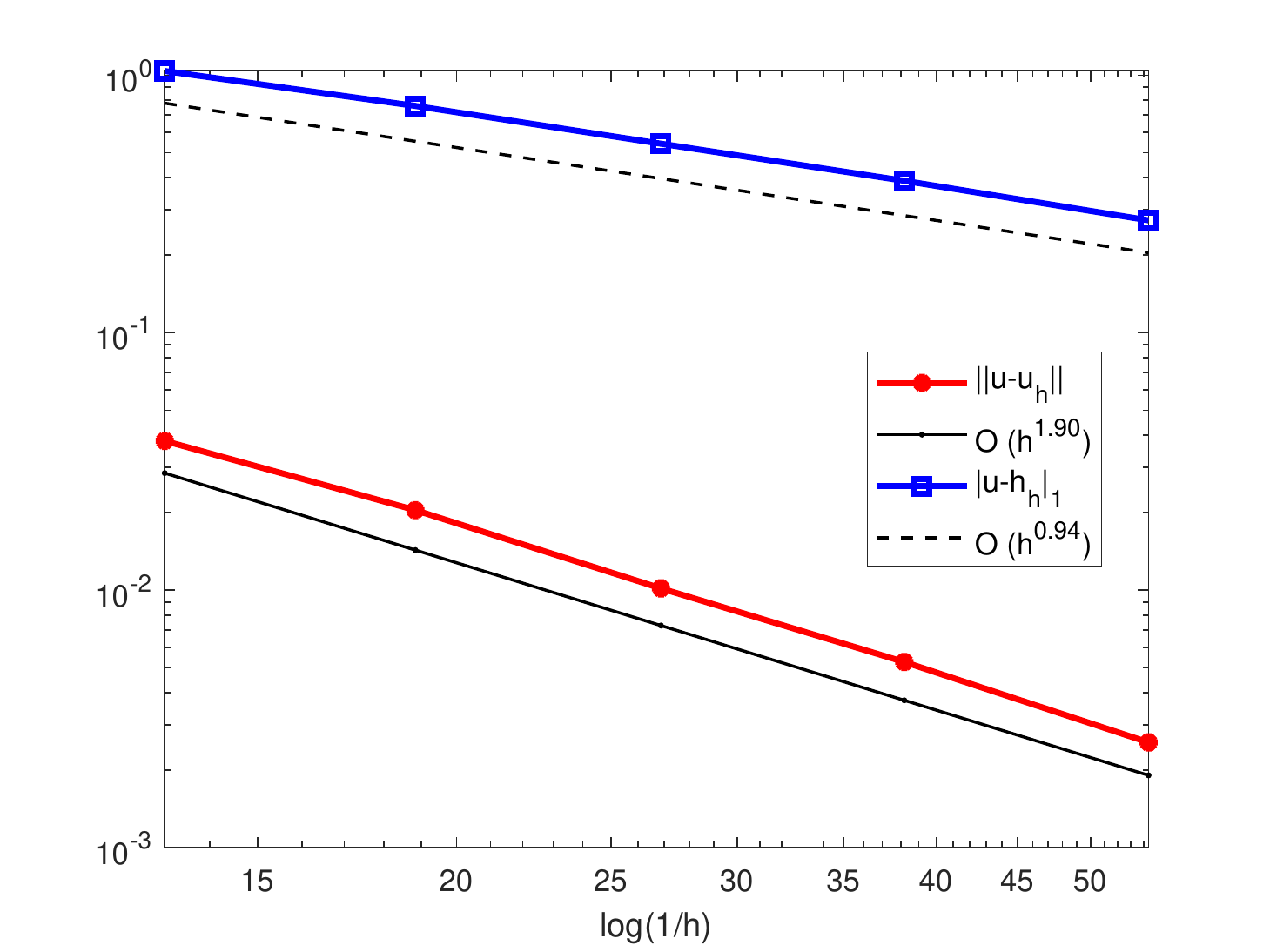}}
  \subfigure[Triangular mesh]{\includegraphics[height=5cm]{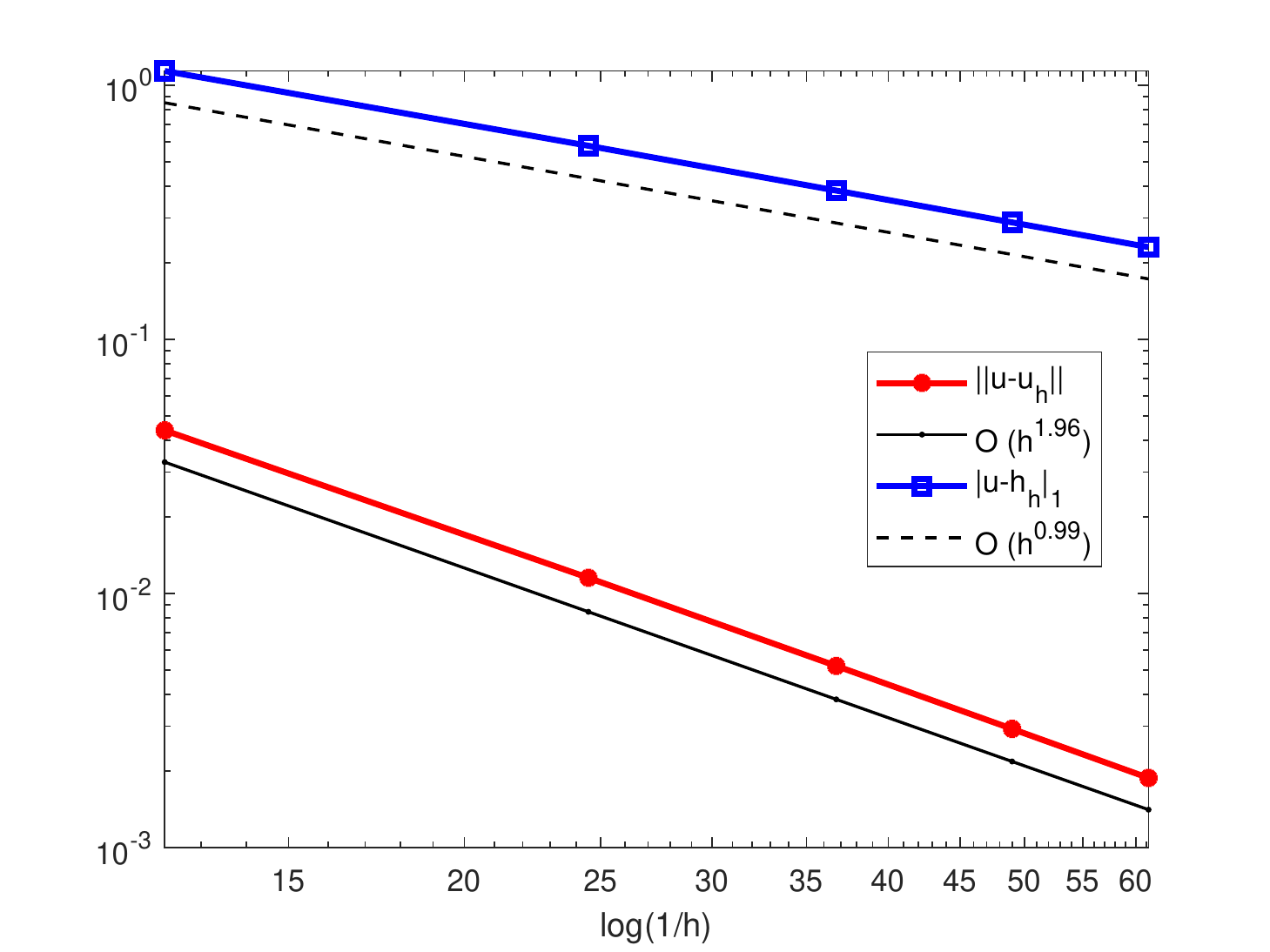}}\\
  \caption{The convergence rate of Example \ref{divfree} with refineType=1}\label{fig:divfree}
\end{figure}

\subsection{The pure traction problem in the enhancement space}

\begin{figure}[!htb]
  \centering
  \subfigure[Solutions]{\includegraphics[scale=0.5,trim=80 80 80 80,clip]{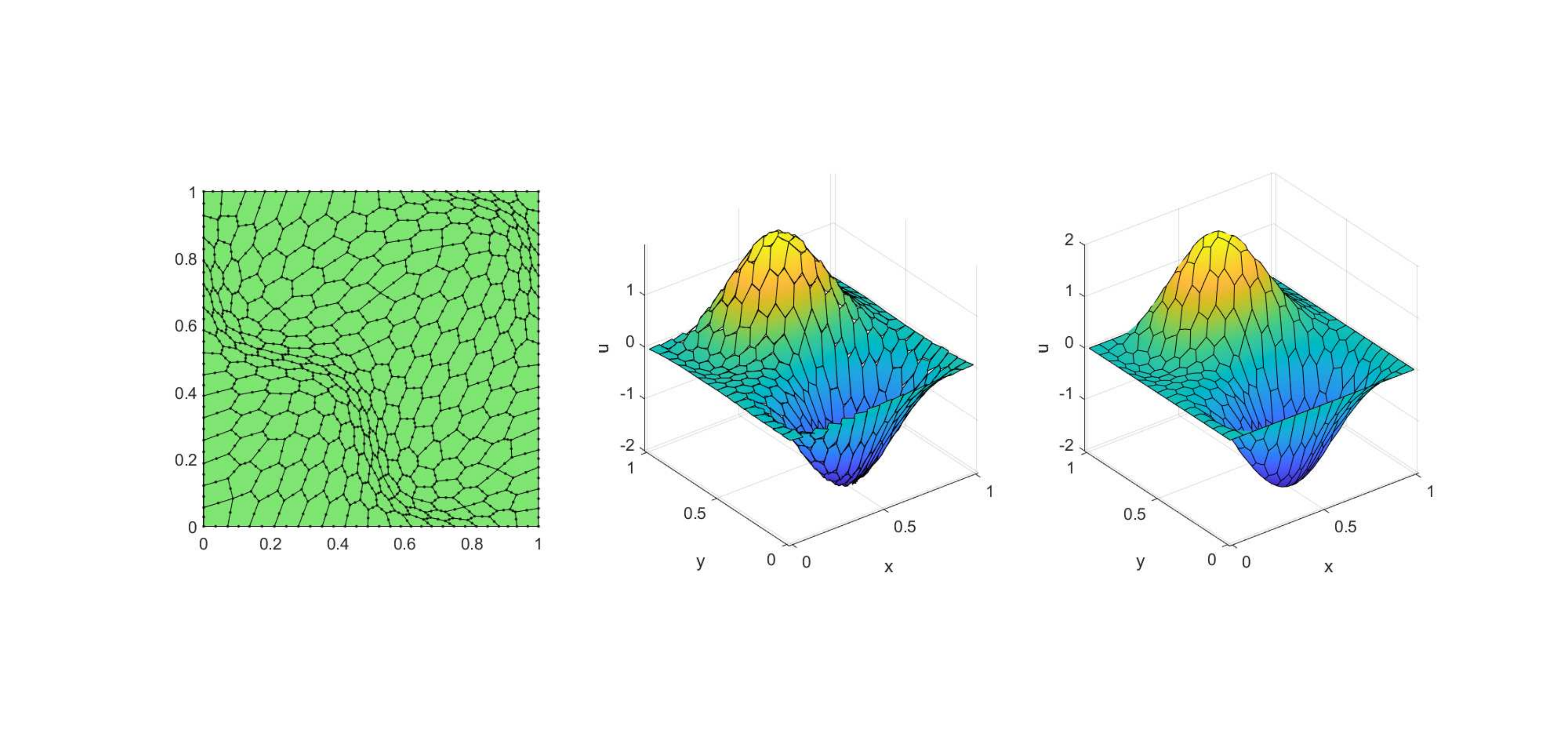}}\\
  \subfigure[Convergence rate]{\includegraphics[scale=0.5]{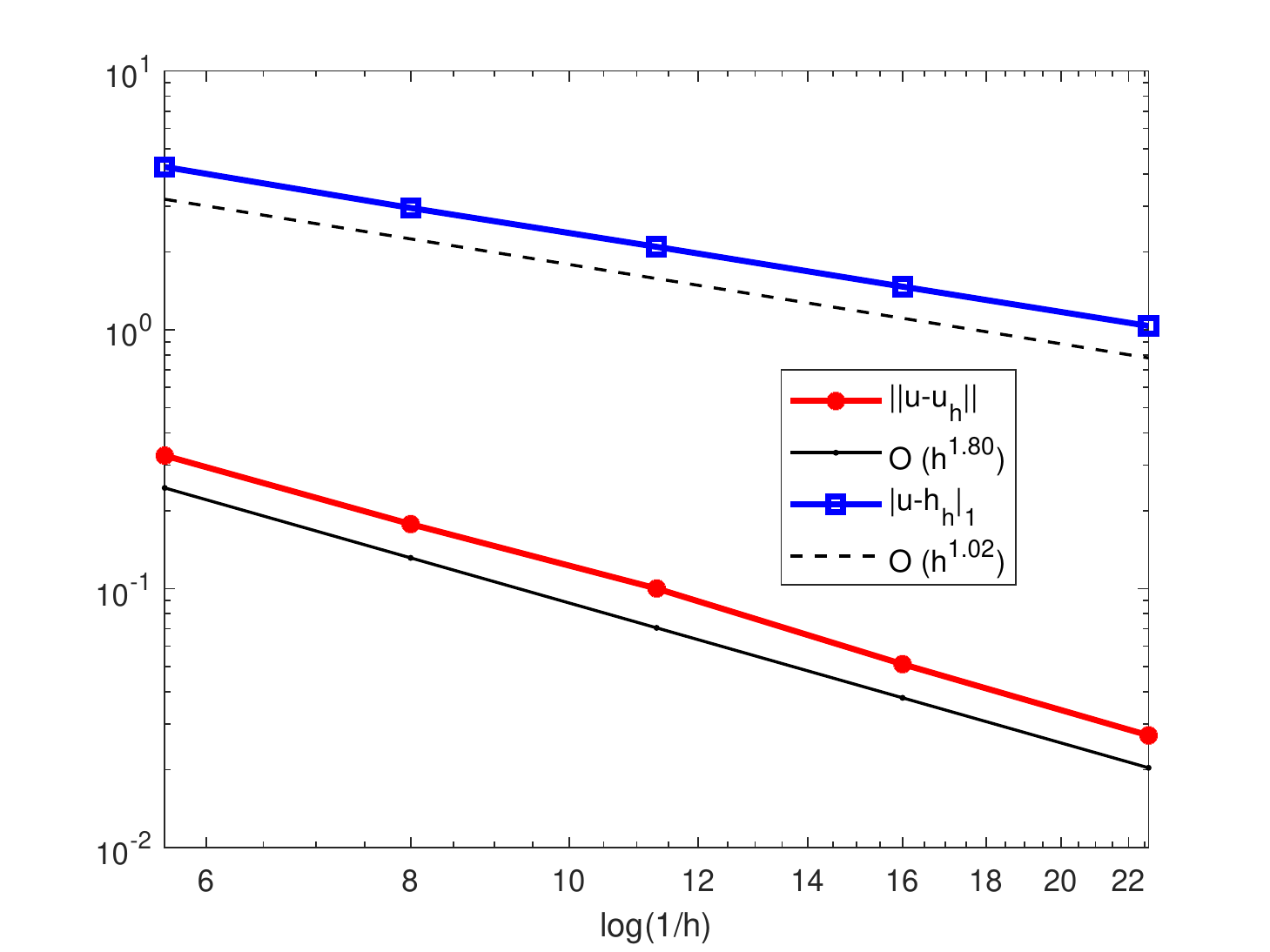}}\\
  \caption{Numerical result of the VEM in the enhancement space \eqref{enhancement} for distorted polygonal meshes with refineType=3. }\label{fig:distort4}
\end{figure}

We can also test the performance for the VEM in the enhancement space \eqref{enhancement}. In the implementation of \eqref{augVEM}, one just needs to replace the integrands on $\partial \Omega$ with the ones on $\Omega$. We still consider Example \ref{example1} and present the numerical result in Fig.~\ref{fig:distort4}. Similar behaviours are observed, which confirms the discussion in Section \ref{sec:discussion}.

\subsection{The problem with mixed boundary conditions}

\begin{example}\label{ex:mixed}
The proposed virtual element method also applies to the pure displacement problem or the problem with mixed boundary conditions:
\[
\begin{cases}
- {\rm div}~\bb{\sigma}(\bb{u}) = \bb{f}  \quad & \mbox{in} ~~\Omega, \\
\bb{\sigma}(\bb{u})\bb{n} = \bb{g}_1 \quad & \mbox{on} ~~\Gamma, \\
\bb{u}  = \bb{g}_2  \quad & \mbox{on} ~~\partial\Omega \backslash \Gamma.\\
\end{cases}
\]
\end{example}

We still consider the exact solution in Example \ref{divfree} with Dirichlet boundary condition imposed on $y=0$ and
repeat the numerical simulation for the distorted mesh as shown in Fig.~\ref{fig:distortionSquare}. We remark that in this case there is no need to introduce the Lagrange multipliers $\beta_i$.

\begin{figure}[!htb]
  \centering
  \includegraphics[scale=0.5]{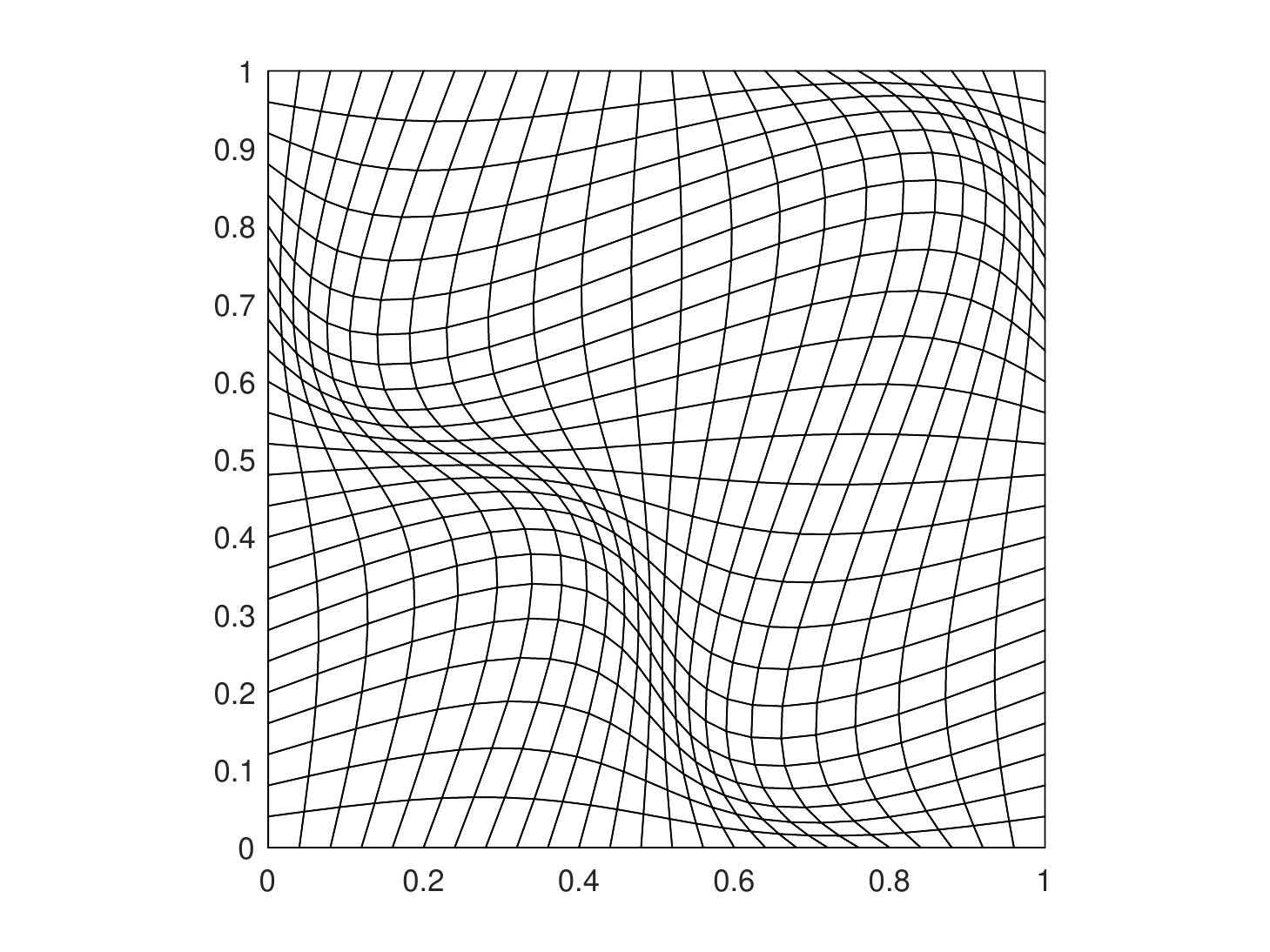}\\
  \caption{A distorted rectangular mesh}\label{fig:distortionSquare}
\end{figure}

Let $(\xi, \eta)$ be the coordinates on the original mesh. The nodes of the distorted mesh are obtained by the following transformation
\[
x = \xi + t_c \sin(2\pi\xi)\sin(2\pi\eta), \quad y = \eta + t_c \sin(2\pi\xi)\sin(2\pi\eta),
\]
where $(x,y)$ is the coordinate of new nodal points; $t_c$, taken as $0.1$ in the computation, is the distortion parameter.

\begin{figure}[!htb]
  \centering
  \subfigure[Rectangular mesh]{\includegraphics[scale=0.5]{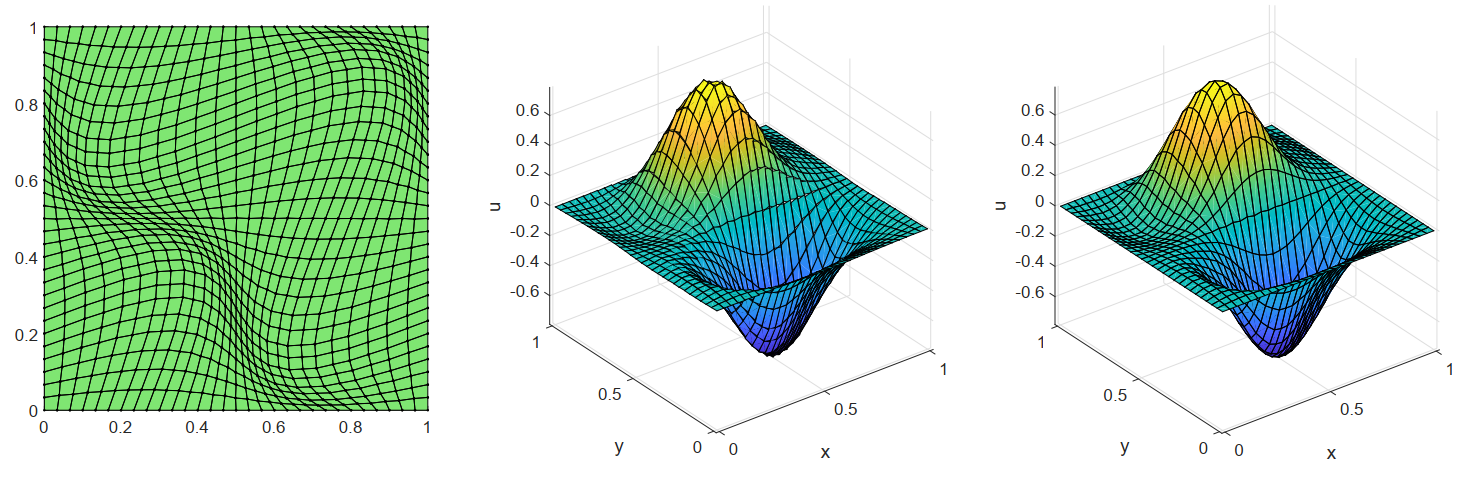}}\\
  \subfigure[Polygonal mesh]{\includegraphics[scale=0.5]{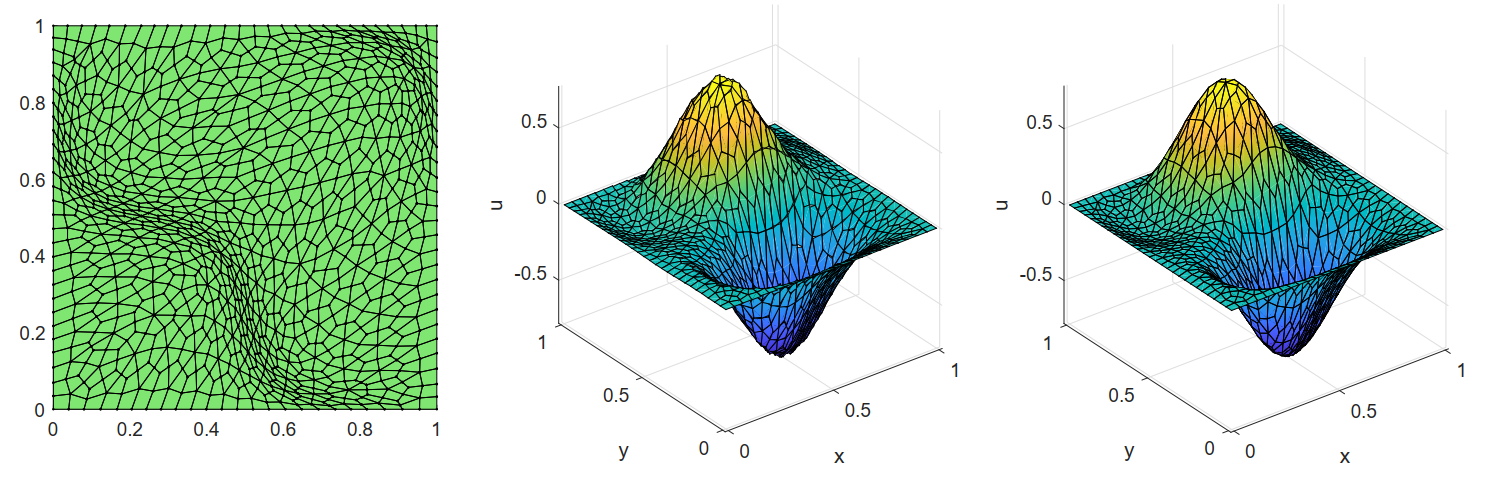}}\\
  \caption{Numerical and exact solutions of Example \ref{ex:mixed} for distorted mesh with refineType=1. }\label{fig:distort}
\end{figure}

\begin{table}[!htb]
  \centering
  \caption{Convergence rate of Example \ref{ex:mixed} w.r.t. $L^2$ norm for quadrilateral distorted mesh with refineType=2}\label{tab:Ex3rat2}
  \begin{tabular}{ccccccccccccccccc}
  \toprule[0.2mm]
  $\lambda \backslash h$ & $1/5$   &$1/10$   &$1/15$   &$1/20$   &$1/25$  & Rate\\
  \midrule[0.3mm]
   $10^2$     & 6.7057e-02   &2.1470e-02   &9.8523e-03   &5.6095e-03   &3.6112e-03 & 1.82\\
   $10^8$     & 6.7205e-02   &2.1516e-02   &9.8757e-03   &5.6234e-03   &3.6204e-03 & 1.82\\
  \bottomrule[0.2mm]
\end{tabular}
\end{table}

Fig.~\ref{fig:distort}~(a) and (b) display the simulation results for the first type of mesh refinement on quadrilateral distorted mesh and general polygonal distorted mesh, respectively. The numerical solutions are well matched with the exact ones as observed. The errors in the $L^2$ norm for the second and third types of mesh refinement are listed in Tab.~\ref{tab:Ex3rat2} and Tab.~\ref{tab:Ex3rat3}, respectively. Despite the very distorted meshes, we can still obtain almost second-order convergence for different values of $\lambda$.

\begin{table}[!htb]
  \centering
  \caption{Convergence rate of Example \ref{ex:mixed} w.r.t. $L^2$ norm for quadrilateral distorted mesh with refineType=3}\label{tab:Ex3rat3}
  \begin{tabular}{ccccccccccccccccc}
  \toprule[0.2mm]
  $\lambda \backslash h$ & $1/5$   &$1/10$   &$1/15$   &$1/20$   &$1/25$  & Rate\\
  \midrule[0.3mm]
   $10^2$     & 1.5715e-01   &5.0710e-02   &2.3895e-02   &1.3758e-02   &8.9081e-03 & 1.78\\
   $10^8$     & 1.5730e-01   &5.0792e-02   &2.3943e-02   &1.3789e-02   &8.9297e-03 & 1.78\\
  \bottomrule[0.2mm]
\end{tabular}
\end{table}

\begin{table}[!htb]
  \centering
  \caption{Convergence rate of Example \ref{example1} w.r.t. $L^2$ norm for the unified locking-free scheme (Polygonal meshes with refineType=1)}\label{tab:uniform}
  \begin{tabular}{ccccccccccccccccc}
  \toprule[0.2mm]
  $\lambda \backslash h$ & $1/5$   &$1/10$   &$1/15$   &$1/20$   &$1/25$  & Rate\\
  \midrule[0.3mm]
   $10^0$     & 6.7235e-02   &3.6869e-02   &1.8102e-02   &9.4467e-03   &4.7476e-03 & 1.88\\
   $10^2$     & 6.9307e-02   &3.8020e-02   &1.8849e-02   &9.8457e-03   &4.9499e-03 & 1.87\\
   $10^4$     & 6.9387e-02   &3.8072e-02   &1.8875e-02   &9.8609e-03   &4.9582e-03 & 1.87\\
   $10^6$     & 6.9387e-02   &3.8072e-02   &1.8875e-02   &9.8611e-03   &4.9583e-03 & 1.87\\
   $10^8$     & 6.9387e-02   &3.8072e-02   &1.8875e-02   &9.8611e-03   &4.9583e-03 & 1.87\\
  \bottomrule[0.2mm]
\end{tabular}
\end{table}

\subsection{The unified locking-free scheme}

For the unified locking-free scheme, the subroutine {\color{gray} elasticityVEM\_NCreducedIntegration.m} can be modified with slight changes.  The new M-function is {\color{gray} elasticityVEM\_NCUniformReducedIntegration.m} and the test script {\color{gray} main\_elasticityVEM\_NCUniformReducedIntegration.m} verifies the convergence rates.
We still consider Example \ref{example1} with numerical results presented in Tab.~\ref{tab:uniform}, from which we observe a similar behaviour as that of the original VEMs.

\section{Conclusions}

We proposed a locking-free nonconforming VEM in the lowest order case for solving the linear elastic problems, which is an evolution of the locking-free finite element in \cite{Falk-1991} to general polygonal meshes by using the reduced integration technique. The parameter-free property and the optimal error estimate are established. A unified scheme both for the conforming and nonconforming VEMs is also constructed. Numerical results are consistent with theoretical findings. The proposed method can be adapted to a mixed formulation, as was done in \cite{Falk-1991}.


\section*{References}


\begin{thebibliography}{10}

\bibitem{Ahmad-Alsaedi-Brezzi-Marini-2013}
B.~Ahmad, A.~Alsaedi, F.~Brezzi, L.~D. Marini, and A.~Russo.
\newblock Equivalent projectors for virtual element methods.
\newblock {\em Comput. Math. Appl.}, 66(3):376--391, 2013.

\bibitem{Artioli-Miranda-Lovadina-Patruno-2017}
E.~Artioli, S.~de~Miranda, C.~Lovadina, and L.~Patruno.
\newblock A stress/displacement virtual element method for plane elasticity
  problems.
\newblock {\em Comput. Methods Appl. Mech. Engrg.}, 325:155--174, 2017.

\bibitem{Artioli-Miranda-Lovadina-Patruno-2018}
E.~Artioli, S.~de~Miranda, C.~Lovadina, and L.~Patruno.
\newblock A family of virtual element methods for plane elasticity problems
  based on the {H}ellinger-{R}eissner principle.
\newblock {\em Comput. Methods Appl. Mech. Engrg.}, 340:978--999, 2018.

\bibitem{Beirao-Brezzi-Cangiani-Manzini-2013}
L.~{Beir{\~{a}}o da Veiga}, F.~Brezzi, A.~Cangiani, G.~Manzini, L.~D. Marini,
  and A.~Russo.
\newblock Basic principles of virtual element methods.
\newblock {\em Math. Models Meth. Appl. Sci.}, 23(1):199--214, 2013.

\bibitem{Beirao-Brezzi-Marini-2013}
L.~{Beir{\~{a}}o da Veiga}, F.~Brezzi, and L.~D. Marini.
\newblock Virtual elements for linear elasticity problems.
\newblock {\em SIAM J. Numer. Anal.}, 51(2):794--812, 2013.

\bibitem{Beirao-Brezzi-Marini-Russo-2014}
L.~{Beir{\~{a}}o da Veiga}, F.~Brezzi, L.~D. Marini, and A.~Russo.
\newblock The hitchhiker's guide to the virtual element method.
\newblock {\em Math. Models Meth. Appl. Sci.}, 24(8):1541--1573, 2014.

\bibitem{Beirao-Lovadina-Mora-2015}
L.~Beir\~{a}o~da Veiga, C.~Lovadina, and D.~Mora.
\newblock A virtual element method for elastic and inelastic problems on
  polytope meshes.
\newblock {\em Comput. Methods Appl. Mech. Engrg.}, 295:327--346, 2015.

\bibitem{Beirao-Lovadina-Russo-2017}
L.~{Beir{\~{a}}o da Veiga}, C.~Lovadina, and A.~Russo.
\newblock Stability analysis for the virtual element method.
\newblock {\em Math. Models Methods Appl. Sci.}, 27(13):2557--2594, 2017.

\bibitem{Berbatov-Lazarov-Jivkov-2021}
K.~Berbatov, B.~S. Lazarov, and A.~P. Jivkov.
\newblock A guide to the finite and virtual element methods for elasticity.
\newblock {\em Appl. Numer. Math.}, 169:351--395, 2021.

\bibitem{BrennerScott2008}
S.~C. Brenner and L.~R. Scott.
\newblock {\em The Mathematical Theory of Finite Element Methods}.
\newblock Springer, New York, 3rd edition, 2008.

\bibitem{Brenner-Sung-1992}
S.~C. Brenner and L.~Sung.
\newblock Linear finite element methods for planar linear elasticity.
\newblock {\em Math. Comp.}, 59(200):321--338, 1992.

\bibitem{Brezzi-Buffa-Lipnikov-2009}
F.~Brezzi, A.~Buffa, and K.~Lipnikov.
\newblock Mimetic finite differences for elliptic problems.
\newblock {\em M2AN Math. Model. Numer. Anal.}, 43(2):277--295, 2009.

\bibitem{Caceres-Gatica-Sequeira-2019}
E.~C\'{a}ceres, G.~N. Gatica, and F.~A. Sequeira.
\newblock A mixed virtual element method for a pseudostress-based formulation
  of linear elasticity.
\newblock {\em Appl. Numer. Math.}, 135:423--442, 2019.

\bibitem{Chen-Huang-2018}
L.~Chen and J.~Huang.
\newblock Some error analysis on virtual element methods.
\newblock {\em Calcolo}, 55(1):Paper No. 5, 23, 2018.

\bibitem{Chen-HuangX-2020}
L.~Chen and X.~Huang.
\newblock {Nonconforming virtual element method for {$2m$}-th order partial
  differential equations in {$R^n$}}.
\newblock {\em Math. Comput.}, 89(324):1711--1744, 2020.

\bibitem{Dassi-Lovadina-Visinoni-2020}
F.~Dassi, C.~Lovadina, and M.~Visinoni.
\newblock A three-dimensional {H}ellinger-{R}eissner virtual element method for
  linear elasticity problems.
\newblock {\em Comput. Methods Appl. Mech. Engrg.}, 364:112910, 17, 2020.

\bibitem{Dhanush-Natarajan-2019}
V.~Dhanush and S.~Natarajan.
\newblock Implementation of the virtual element method for coupled
  thermo-elasticity in {A}baqus.
\newblock {\em Numer. Algorithms}, 80(3):1037--1058, 2019.

\bibitem{Falk-1991}
R.~S. Falk.
\newblock Nonconforming finite element methods for the equations of linear
  elasticity.
\newblock {\em Math. Comp.}, 57(196):529--550, 1991.

\bibitem{Gain-Talischi-Paulino-2014}
A.~L. Gain, C.~Talischi, and G.~H. Paulino.
\newblock On the virtual element method for three-dimensional linear elasticity
  problems on arbitrary polyhedral meshes.
\newblock {\em Comput. Methods Appl. Mech. Engrg.}, 282:132--160, 2014.

\bibitem{Huang-Lin-Yu-arxiv}
J.~G. Huang, S.~Lin, and Y.~Yu.
\newblock A lowest-order locking-free conforming virtual element method based
  on the reduced integration technique for linear elasticity problems.
\newblock {\em \rm arXiv:2112.13848}, 2021.

\bibitem{Huang-2020}
X.~Huang.
\newblock Nonconforming virtual element method for 2{$m$}th order partial
  differential equations in {$\mathbb{R}^n$} with {$m>n$}.
\newblock {\em Calcolo}, 57(4):42, 2020.

\bibitem{Hughes-Cohen-Haroun-1978}
T.~J.~R. Hughes, M.~Cohen, and M.~Haroun.
\newblock Reduced and selective integration techniques in the finite element
  analysis of plates.
\newblock {\em Nuclear Engineering $\&$ Design}, 46(1):203--222, 1978.

\bibitem{Kouhia-Stenberg-1995}
R.~Kouhia and R.~Stenberg.
\newblock A linear nonconforming finite element method for nearly
  incompressible elasticity and stokes flow.
\newblock {\em Comput. Methods Appl. Mech. Engrg.}, 124:195--212, 1995.

\bibitem{Kwak-Park-2021}
D.~Y. Kwak and H.~Park.
\newblock Lowest-order virtual element methods for linear elasticity problems.
\newblock {\em \rm arXiv:2108.09971v1.}, 2021.

\bibitem{Malkus-Hughes-1978}
D.~S. Malkus and T.~J.~R. Hughes.
\newblock Mixed finite element methods {\textemdash} reduced and selective
  integration techniques: A unification of concepts.
\newblock {\em Computer Methods in Applied Mechanics and Engineering},
  15(1):63--81, 1978.

\bibitem{Mora-Rivera-2020}
D.~Mora and G.~Rivera.
\newblock A priori and a posteriori error estimates for a virtual element
  spectral analysis for the elasticity equations.
\newblock {\em IMA J. Numer. Anal.}, 40(1):322--357, 2020.

\bibitem{Reddy-Huyssteen-2019}
B.~D. Reddy and D.~van Huyssteen.
\newblock A virtual element method for transversely isotropic elasticity.
\newblock {\em Comput. Mech.}, 64(4):971--988, 2019.

\bibitem{Talischi-Paulino-Pereira-2012}
C.~Talischi, G.~H. Paulino, A.~Pereira, and I.~F.~M. Menezes.
\newblock Polymesher: a general-purpose mesh generator for polygonal elements
  written in {M}atlab.
\newblock {\em Struct. Multidiscip. Optim.}, 45(3):309--328, 2012.

\bibitem{Tang-Liu-Zhang-Feng-2020}
X.~Tang, Z.~Liu, B.~Zhang, and M.~Feng.
\newblock A low-order locking-free virtual element for linear elasticity
  problems.
\newblock {\em Comput. Math. Appl.}, 80(5):1260--1274, 2020.

\bibitem{Zhang-Feng-2018}
B.~Zhang and M.~Feng.
\newblock Virtual element method for two-dimensional linear elasticity problem
  in mixed weakly symmetric formulation.
\newblock {\em Appl. Math. Comput.}, 328:1--25, 2018.

\bibitem{Zhang-Zhao-Yang-Chen-2019}
B.~Zhang, J.~Zhao, Y.~Yang, and S.~Chen.
\newblock The nonconforming virtual element method for elasticity problems.
\newblock {\em J. Comput. Phys.}, 378:394--410, 2019.

\end{thebibliography}
\end{document}